\documentclass[twoside]{article}
\usepackage[preprint]{aistats2027}
\usepackage[T1]{fontenc}
\usepackage[utf8]{inputenc}
\usepackage{lmodern}
\usepackage{amsmath,amssymb,amsthm,mathtools,graphicx,booktabs,microtype}
\usepackage[round]{natbib}
\usepackage{mathrsfs}
\usepackage[hidelinks]{hyperref}
\newtheorem{theorem}{Theorem}

\newtheorem{proposition}[theorem]{Proposition}
\newtheorem{corollary}[theorem]{Corollary}
\theoremstyle{definition}

\theoremstyle{remark}

\hypersetup{pdftitle={Sharp Initialization Guarantees for the Heavy-Ball Method},pdfauthor={}}
\begin{document}
\twocolumn[
\aistatstitle{Sharp Initialization Guarantees for the Heavy-Ball Method}
\aistatsauthor{Kang Liu$^1$, Quanen Wu$^2$, Zhan Sun$^3$, Mengxiao Chen$^1$, 
Suyan Li$^4$, 
Bohao Qu$^5$}
\aistatsaddress{
  $^1$Xi'an Jiaotong University (xjtu.edu.cn)\\
  $^2$Hong Kong Polytechnic University (polyu.edu.hk)\\
  $^3$Huazhong University of Science and Technology (hust.edu.cn)\\
  $^4$National University of Singapore (nus.edu)\\
  $^5$A*STAR (a-star.edu.sg)
}
]
\begin{abstract}
The heavy-ball method can accelerate convergence near the minimizer, but it may also enter persistent oscillations on general strongly convex objectives. How should it be initialized to reliably realize this local advantage? We study the initialization neighborhood in which convergence can be guaranteed using only curvature and regularity bounds. This neighborhood shrinks as momentum approaches one; for Lipschitz-continuous Hessians, its radius is of the same order as damping. Matching upper and lower bounds show that this initialization requirement cannot be relaxed uniformly over the function class. We then derive a startup rule based on the current gradient: a finite gradient-descent warm-up enters the guaranteed region before switching to heavy-ball. With parameters optimal for strongly convex quadratics, the subsequent iterates satisfy an accelerated local asymptotic rate bound. Code is available at \url{https://anonymous.4open.science/r/HeavyBall-811D/}.
\end{abstract}

\section{Introduction}
\label{sec:introduction}

Momentum is a classical mechanism for accelerating first-order optimization. The heavy-ball method incorporates momentum by combining the current gradient with the previous displacement:
\begin{equation}
    v_{k+1}=\beta v_k-\gamma\nabla f(x_k),
    \quad
    x_{k+1}=x_k+v_{k+1},
    \label{eq:hb}
\end{equation}
where $\gamma>0$ is the step size, $\beta\in[0,1)$ is the momentum parameter, and $v_k:=x_k-x_{k-1}$. For a $\mu$-strongly convex objective with an $L$-Lipschitz gradient, let $\kappa:=L/\mu$ denote its condition number. On quadratic objectives, \citet{polyak1964} established that suitable parameters yield the asymptotic convergence factor $(\sqrt{\kappa}-1)/(\sqrt{\kappa}+1)$. Beyond quadratics, however, stability of all quadratic models with curvature in $[\mu,L]$ does not ensure convergence from arbitrary initial states: \citet{lessard2016} exhibited an attracting periodic orbit on a one-dimensional strongly convex objective with a Lipschitz gradient.


Periodic obstructions also limit global rates. \citet{goujaud2025} proved that optimizing the fixed heavy-ball parameters does not improve the worst-case condition-number order of gradient descent, while \citet{fotopoulos2026} established the corresponding obstruction in one dimension through projected cycles. Locally, \citet{polyak1964} established convergence and asymptotic rate guarantees near a strongly convex minimizer; \citet{kassing2026} extended local acceleration to sufficiently smooth objectives satisfying the Polyak--{\L}ojasiewicz inequality. Further related work is discussed in Appendix~\ref{sec:related_work}. These results make initialization central to retaining the local benefits of momentum. The natural questions are \textit{how large a neighborhood can be guaranteed to converge uniformly, and how can one certify entry into that region?} 

We study strongly convex objectives whose Hessian eigenvalues lie in $[\mu,L]$ and whose Hessians are $\nu$-H\"older continuous with constant at most $M_\nu>0$, where $0<\nu\le1$. We define the \emph{uniform attraction radius} $r_{\mathrm{univ}}$ to quantify the initialization error that can be tolerated using only these curvature and regularity bounds: for every objective in the class, every initial position--velocity state at distance less than $r_{\mathrm{univ}}$ from equilibrium $(x_\star,0)$ converges to equilibrium; see \eqref{eq:uniform_radius} for the formal definition. We determine how this radius shrinks as damping $d:=1-\beta$ vanishes. A corrected energy controls the effect of curvature variation within the convergence region, while a scalar cycle interpolation shows how such variation can sustain periodic motion. The cycle amplitude determines the spatial obstruction, and its period determines which parameters admit periodic failure.

Our contributions are summarized as:
\begin{itemize}
    \item \textbf{Sharp attraction scale.}
    For fixed $\mu,L,\gamma,\nu$ with $\gamma L<4$, we prove
    $r_{\mathrm{univ}}
    =\Theta_{\mu,L,\gamma,\nu}
    (((1-\beta)/M_\nu)^{1/\nu})$ as $\beta\uparrow1$,
    with dimension-independent constants, also on the
    $C^\infty$ subclass.

    \item \textbf{Convergence and startup guarantees.}
    An explicit energy region guarantees linear convergence
    and yields a gradient-based startup test.
    Finite GD warm-up ensures entry; quadratic-optimal HB
    parameters then give an accelerated local asymptotic rate bound.

    \item \textbf{An elementary non-acceleration proof.}
    The same scalar cycle construction yields an elementary
    proof of the known global non-acceleration law.
\end{itemize}

\section{Problem Statement}
\label{sec:problem}

\subsection{Objectives and initialization}

We consider the optimization problem
\[
    \min_{x\in\mathbb R^n} f(x).
\]
Fix $0<\mu<L$, $M_\nu>0$, and $0<\nu\le1$. Let
$\mathcal F_{\mu,L,M_\nu}^{n,\nu}$ denote the class of twice continuously
differentiable functions $f:\mathbb R^n\to\mathbb R$ satisfying
\(
    \mu I\preceq \nabla^2 f(x)\preceq LI,
    x\in\mathbb R^n,
\)
and
\[
    \|\nabla^2 f(x)-\nabla^2 f(y)\|_{\mathrm{op}}
    \le M_\nu\|x-y\|^\nu,
    ~~ x,y\in\mathbb R^n.
\]
Here $I$ is the identity matrix, $\preceq$ denotes the semidefinite order,
and $\|\cdot\|$ and $\|\cdot\|_{\mathrm{op}}$ denote the Euclidean and
operator norms, respectively. The constants $\mu$ and $L$ bound the
curvature, while $M_\nu$ controls its spatial variation. The case
$\nu=1$ corresponds to Lipschitz-continuous Hessians.

Every $f\in\mathcal F_{\mu,L,M_\nu}^{n,\nu}$ has a unique minimizer
$x_\star:=\arg\min_x f(x)$. Write $f_\star:=f(x_\star)$. We apply
heavy-ball~\eqref{eq:hb} with fixed parameters $\gamma>0$ and
$\beta\in[0,1)$. Its initial state consists of a position $x_0$ and a
velocity $v_0$, and its equilibrium is $(x_\star,0)$. We measure the
initialization error by the Euclidean norm
$\|(x_0-x_\star,v_0)\|_2$.

\subsection{Uniform attraction}

We seek initialization guarantees that hold for every objective in the
prescribed class:
\begin{equation}
    \begin{gathered}
        \text{for every }f\in\mathcal F_{\mu,L,M_\nu}^{n,\nu}
        \text{ and }(x_0,v_0)\in\mathbb R^{2n},\\
        \|(x_0-x_\star,v_0)\|_2\le r
        ~\Longrightarrow~
        (x_k,v_k)\to(x_\star,0).
    \end{gathered}
    \label{eq:uniform_guarantee}
\end{equation}
Thus, $r$ measures the position and velocity error that can be tolerated
using only the class bounds and algorithm parameters. Define the
\emph{uniform attraction radius} by
\begin{equation}
    r_{\mathrm{univ}}
    :=
    \sup\bigl\{
        r\ge0:\eqref{eq:uniform_guarantee}\text{ holds}
    \bigr\}.
    \label{eq:uniform_radius}
\end{equation}
Its dependence on $n,\mu,L,M_\nu,\nu,\gamma$, and $\beta$ is usually suppressed below. An individual objective can have a larger
attraction region; $r_{\mathrm{univ}}$ describes the initialization
accuracy sufficient for the entire function class.

Our main question concerns the dependence of this radius on the damping
$d:=1-\beta$. We fix $\mu,L,\gamma$, and $\nu$ with $\gamma L<4$ and
study the limit $d\downarrow0$. In this regime, the strict quadratic
stability condition $\gamma L<2(1+\beta)$ holds for all sufficiently
small $d$. We seek a scale $s(d,M_\nu)$ and constants $c,C,d_0>0$,
depending only on the fixed parameters, such that
\begin{equation}
    c\,s(d,M_\nu)
    \le r_{\mathrm{univ}}
    \le C\,s(d,M_\nu)
    ~~ (0<d<d_0)
    \label{eq:radius_problem}
\end{equation}
for every dimension $n\ge1$ and every $M_\nu>0$. The lower bound requires
convergence for all admissible objectives and initial states in the
corresponding ball. The upper bound requires an admissible objective
with a nonconvergent initial state at a comparable distance from
equilibrium.

The dependence on $M_\nu$ follows from spatial scaling. For $R>0$,
define $f_R(x):=R^2f(x/R)$. This transformation preserves the curvature
bounds, multiplies the Hessian H\"older constant by $R^{-\nu}$, and
scales both position and velocity by $R$ without changing $\gamma$ or
$\beta$. Consequently, with all other parameters fixed,
\begin{equation}
    r_{\mathrm{univ}}(M_\nu)
    =
    M_\nu^{-1/\nu}r_{\mathrm{univ}}(1).
    \label{eq:radius_scaling}
\end{equation}
The remaining question is the sharp dependence on damping.
The uniform attraction radius specifies the required initialization
accuracy, but its definition involves the unknown minimizer $x_\star$.
To decide when to start heavy-ball, we also seek a sufficient condition
expressed through the current gradient, algorithm parameters, and class bounds. This turns the accuracy requirement into a
computable startup criterion.

\section{Convergence Guarantees}
\label{sec:energy}

Consider minimizing an objective
$f\in\mathcal F_{\mu,L,M_\nu}^{n,\nu}$ with minimizer $x_\star$.
We establish the lower bound in~\eqref{eq:radius_problem} by finding an
explicit radius for which the heavy-ball iterates converge to
$(x_\star,0)$ as required by~\eqref{eq:uniform_guarantee}.

Our approach is to construct a Lyapunov function
$\mathcal H_f(x,v)$ and a budget $\mathcal B_\nu>0$ such that the
sublevel set
$\Omega_f:=\{(x,v):\mathcal H_f(x,v)\le\mathcal B_\nu\}$
is invariant and every trajectory starting in it converges.
A ball contained in $\Omega_f$ then gives a uniform attraction-radius
bound. Evaluating the same energy at zero velocity yields a
gradient-based startup condition.

Recall $d=1-\beta$, and set
$b:=1+\beta$, $h:=\gamma L$, and $\tau:=2b-h$.
Throughout this section, we assume $\tau>0$, the strict quadratic
stability condition. We suppress the subscript $f$ on the energy.

\subsection{A Lyapunov function}

Write $g:=\nabla f(x)$. A natural discrete energy is
\[
    E(x,v)
    :=
    f(x)-f_\star
    +\frac{b}{4\gamma}\|v\|^2
    -\frac12\langle v,g\rangle.
\]
It combines objective error, motion, and a cross term that separates
damping from the error caused by varying curvature
\citep{orvieto2024}.

To examine one iteration, let $g_k:=\nabla f(x_k)$,
$v:=v_k$, and $w:=v_{k+1}=x_{k+1}-x_k$. Define the trapezoidal error
\(
    T_k
    :=
    \frac12\langle g_k+g_{k+1},w\rangle
    -\bigl(f(x_{k+1})-f(x_k)\bigr).
\)
Direct substitution of the heavy-ball update gives
\begin{equation}
    E(x_{k+1},v_{k+1})-E(x_k,v_k)
    =
    -\frac{d}{4\gamma}\|v+w\|^2-T_k.
    \label{eq:base_energy}
\end{equation}
For quadratic objectives, $T_k=0$. Under the H\"older Hessian
assumption,
\[
    |T_k|
    \le
    \frac{M_\nu}{4(\nu+2)}\|w\|^{2+\nu}.
\]
Appendix~\ref{app:energy} derives this estimate; related trapezoidal
bounds appear in \citet{marumo2025}. The difficulty in using
\eqref{eq:base_energy} is that $v+w$ can be small even when the next
displacement $w$ is large.

To introduce direct displacement dissipation, let $t>0$ be a correction
parameter and set $A:=b-(1+3\beta)t$. Consider the corrected energy
\begin{equation}
    \mathcal H(x,v)
    :=
    f(x)-f_\star
    +\frac{A}{4\gamma}\|v\|^2
    -\frac{1-t}{2}\langle v,\nabla f(x)\rangle.
    \label{eq:corrected_energy}
\end{equation}
The relation between $A$ and $t$ organizes the quadratic terms in the
one-step energy change into two forms: one involving $v+w$ and the
other involving $w$. The coefficient of $\|v+w\|^2$ becomes
$-(d-bt)/(4\gamma)$. We choose $t:=d/(2b)$, retaining half of the
original dissipation while obtaining direct control of the
displacement.

Define the averaged Hessian along the step by
$\overline H_k:=\int_0^1\nabla^2f(x_k+uw)\,du$.
Let $\mathcal H_k:=\mathcal H(x_k,v_k)$, we obtain
\begin{equation}
    \begin{aligned}
        \mathcal H_{k+1}-\mathcal H_k
        ={}&
        -\frac{d}{8\gamma}\|v+w\|^2-T_k\\
        &-\frac{t}{2\gamma}
        \langle w,(2bI-\gamma\overline H_k)w\rangle.
    \end{aligned}
    \label{eq:corrected_identity}
\end{equation}
Since $2bI-\gamma\overline H_k\succeq\tau I$, this yields
\begin{equation}
    \begin{aligned}
        \mathcal H_{k+1}-\mathcal H_k
        \le{}&
        -\frac{d}{8\gamma}\|v+w\|^2
        -\frac{\tau t}{2\gamma}\|w\|^2\\
        &+\frac{M_\nu}{4(\nu+2)}\|w\|^{2+\nu}.
    \end{aligned}
    \label{eq:energy_descent}
\end{equation}

For fixed $\mu,L,\gamma$, and $\nu$ with $\gamma L<4$, we have
$t\asymp d$ and $\tau\asymp1$ as $d\downarrow0$.
Thus, displacement dissipation dominates the nonlinear error when
$M_\nu\|w\|^\nu$ is sufficiently small relative to $d$.
This identifies the scale $(d/M_\nu)^{1/\nu}$.
To obtain an initialization guarantee, we must ensure that the energy
keeps every subsequent displacement within this scale.

\subsection{Invariance and startup}

The corrected energy controls both objective error and the next
displacement. Define
$D:=2A-h(1-t)^2$ and $N:=2\beta^2+hd/2$.
Under $\tau>0$, we have $A,N>0$ and
$D=\tau(1-t)^2+td>0$.
Completing the square gives
\begin{equation}
    \begin{aligned}
        \mathcal H(x,v)\ge{}&
        \frac{D}{2A}\bigl(f(x)-f_\star\bigr)\\
        &+\frac{A}{4\gamma}
        \left\|v-\frac{\gamma(1-t)}{A}\nabla f(x)\right\|^2.
    \end{aligned}
    \label{eq:energy_coercivity}
\end{equation}
In particular, $\mathcal H$ is nonnegative and vanishes only at
$(x_\star,0)$. Moreover,
\begin{equation}
    \|\beta v-\gamma\nabla f(x)\|^2
    \le
    \frac{4\gamma N}{D}\mathcal H(x,v).
    \label{eq:next_velocity}
\end{equation}
Hence, a sufficiently small energy budget bounds the next
displacement, which in turn makes~\eqref{eq:energy_descent}
nonpositive.

\begin{theorem}[Invariant energy region]
\label{thm:invariant}
Suppose $f\in\mathcal F_{\mu,L,M_\nu}^{n,\nu}$ and $\tau>0$.
Define
\begin{equation}
    R_\nu
    :=
    \left(
        \frac{(\nu+2)\tau t}{\gamma M_\nu}
    \right)^{1/\nu},
    ~~
    \mathcal B_\nu
    :=
    \frac{D R_\nu^2}{4\gamma N}.
    \label{eq:energy_budget}
\end{equation}
If $\mathcal H_0\le\mathcal B_\nu$, then
$\mathcal H_k\le\mathcal B_\nu$ and
$\|v_{k+1}\|\le R_\nu$ for every $k\ge0$.
Furthermore, with
$\lambda:=d\tau/[8(7+8/(\gamma\mu))]\in(0,1)$,
\(
    \mathcal H_k
    \le
    (1-\lambda)^k\mathcal H_0,
\)
and
\(
    \|x_k-x_\star\|^2
    \le
    \frac{4A}{D\mu}(1-\lambda)^k\mathcal H_0.
\)
Consequently, $(x_k,v_k)\to(x_\star,0)$.
\end{theorem}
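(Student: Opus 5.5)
The plan is an induction on $k$ that yields invariance. We then turn the one-step descent \eqref{eq:energy_descent} into a contraction by bounding $\mathcal H_k$ from above by the dissipated quantities.

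\textbf{Invariance.} Suppose $\mathcal H_k\le\mathcal B_\nu$. The next displacement is $w=v_{k+1}=\beta v_k-\gamma\nabla f(x_k)$, so \eqref{eq:next_velocity} gives
\[
\|w\|^2\le \frac{4\gamma N}{D}\mathcal H_k\le \frac{4\gamma N}{D}\mathcal B_\nu=R_\nu^2 .
\]
By the definition of $R_\nu$,
\[
\frac{M_\nu}{4(\nu+2)}\|w\|^{\nu}\le \frac{\tau t}{4\gamma}.
\]
Substituting into \eqref{eq:energy_descent} yields
\[
\mathcal H_{k+1}-\mathcal H_k\le -\frac{d}{8\gamma}\|v+w\|^2-\frac{\tau t}{4\gamma}\|w\|^2\le 0 .
\]
Hence $\mathcal H_{k+1}\le\mathcal B_\nu$, which closes the induction and proves $\|v_{k+1}\|\le R_\nu$ for all $k$. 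Along the way we obtain the dissipation inequality
\[
\mathcal H_k-\mathcal H_{k+1}\ge \frac{d}{8\gamma}\|v_k+v_{k+1}\|^2+\frac{\tau d}{8b\gamma}\|v_{k+1}\|^2,
\]
using $t=d/(2b)$.

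\textbf{Linear rate.} We show $\mathcal H_k\le \lambda^{-1}(\mathcal H_k-\mathcal H_{k+1})$, i.e.\ $\mathcal H_k$ is bounded by a constant multiple of $\|v+w\|^2+\|w\|^2$ divided by $\gamma$. This is the main obstacle: the dissipation involves only velocities, while $\mathcal H_k$ also contains $f(x_k)-f_\star$.

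The key observation is that the update gives $\gamma g_k=\beta v-w$. Since $|\beta|\le1$, $\gamma\|g_k\|$ is controlled by $\|v\|+\|w\|$, and $\|v\|\le\|v+w\|+\|w\|$. So
\[
\gamma^2\|g_k\|^2\le 2\|v+w\|^2+8\|w\|^2
\]
or a similar bound. Strong convexity gives
\[
f(x_k)-f_\star\le \frac{\|g_k\|^2}{2\mu}=\frac{\gamma^2\|g_k\|^2}{2\gamma^2\mu},
\]
which produces a term $\frac{1}{\gamma}\cdot\frac{1}{\gamma\mu}(\dots)$. The kinetic and cross terms in \eqref{eq:corrected_energy} are bounded via $A\le b\le 2$, $|1-t|\le1$ and Cauchy--Schwarz, again by $\frac{1}{\gamma}(\|v\|^2+\gamma^2\|g\|^2)$. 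Collecting constants should give
\[
\mathcal H_k\le \frac{c_0}{\gamma}\bigl(\|v+w\|^2+\|w\|^2\bigr)\quad\text{with } c_0\lesssim 7+8/(\gamma\mu).
\]
Comparing with the dissipation coefficients $d/8$ and $\tau d/(8b)\ge \tau d/16$ (and $\tau\le 2b\le4$ to merge the two) then yields
\[
\mathcal H_{k+1}\le(1-\lambda)\mathcal H_k,\qquad \lambda=\frac{d\tau}{8(7+8/(\gamma\mu))}.
\]
I expect the constant bookkeeping here to be the delicate part, so I would track the weights carefully to match the stated $\lambda$. We have $\lambda<1$ since $d\tau\le 4$.

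\textbf{Conclusion.} By \eqref{eq:energy_coercivity}, $\mathcal H_k\ge \frac{D}{2A}(f(x_k)-f_\star)$. Strong convexity gives $f(x_k)-f_\star\ge\frac{\mu}{2}\|x_k-x_\star\|^2$, so
\[
\|x_k-x_\star\|^2\le\frac{4A}{D\mu}\mathcal H_k\le\frac{4A}{D\mu}(1-\lambda)^k\mathcal H_0 .
\]
Also $\|v_{k+1}\|^2\le\frac{4\gamma N}{D}\mathcal H_k\to0$. Together these give $(x_k,v_k)\to(x_\star,0)$.
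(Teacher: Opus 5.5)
Your proposal follows the paper's proof essentially step for step. Invariance comes from \eqref{eq:next_velocity} together with the choice of $R_\nu$, which absorbs the H\"older remainder. The contraction comes from bounding $\mathcal H_k$ by the dissipated quantities via $f(x_k)-f_\star\le\|g_k\|^2/(2\mu)$, Young's inequality, $v=(v+w)-w$ and $\gamma g_k=\beta(v+w)-bw$. Coercivity then gives the position bound.

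The bookkeeping you flagged does need care: keep separate weights instead of a single $c_0$. The $\|v+w\|^2$ coefficient is at most $(7+8/(\gamma\mu))/(4\gamma)$, using $\gamma\mu<\gamma L<2b<4$. That is half the $\|w\|^2$ coefficient $(7+8/(\gamma\mu))/(2\gamma)$, and this halving is exactly what lets $\tau\le4$ merge the two ratios into the stated $\lambda$. A common constant $(7+8/(\gamma\mu))/2$ would fall short of the stated $\lambda$, by a factor up to $2$, whenever $\tau>2$.
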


The invariance argument combines~\eqref{eq:next_velocity} with the
definition of $R_\nu$. If $\mathcal H_k\le\mathcal B_\nu$, then
$\|w\|\le R_\nu$, so the nonlinear remainder consumes at most half of
the displacement dissipation. Therefore,
\begin{equation}
    \mathcal H_{k+1}-\mathcal H_k
    \le
    -\frac{d}{8\gamma}\|v+w\|^2
    -\frac{\tau t}{4\gamma}\|w\|^2
    \le0.
    \label{eq:closed_descent}
\end{equation}
This closes the induction. To obtain the contraction rate, use
$f(x)-f_\star\le\|\nabla f(x)\|^2/(2\mu)$ together with
$v=(v+w)-w$ and $\gamma g_k=\beta(v+w)-bw$ to bound
$\mathcal H_k$ by the dissipative terms in
\eqref{eq:closed_descent}. Comparing coefficients yields $\lambda$;
Appendix~\ref{app:energy} gives the full calculation.
The position estimate follows from~\eqref{eq:energy_coercivity},
and the velocity converges by~\eqref{eq:next_velocity}.

At zero velocity,
$\mathcal H(x,0)=f(x)-f_\star
\le\|\nabla f(x)\|^2/(2\mu)$.
The energy budget therefore gives a startup test that does not require
the minimizer or the optimal value.

\begin{corollary}[Gradient-based startup]
\label{cor:startup}
Suppose $f\in\mathcal F_{\mu,L,M_\nu}^{n,\nu}$ and the target
heavy-ball parameters satisfy $\tau>0$. If
\begin{equation}
    \|\nabla f(x)\|^2\le2\mu\mathcal B_\nu,
    \label{eq:momentum_start}
\end{equation}
then heavy-ball initialized at $(x,0)$ satisfies
Theorem~\ref{thm:invariant}. Gradient descent with step size $1/L$
reaches this condition after finitely many iterations from every
initial position.
\end{corollary}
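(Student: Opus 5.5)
The corollary has two parts. The first is a direct consequence of Theorem~\ref{thm:invariant}. The second is a standard gradient-descent convergence argument.

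\emph{Startup condition.} Heavy-ball is initialized at $(x,0)$, so $v_0=0$. The corrected energy~\eqref{eq:corrected_energy} then reduces to $\mathcal H_0=\mathcal H(x,0)=f(x)-f_\star$, because both velocity terms vanish. Since $f$ is $\mu$-strongly convex, the Polyak--{\L}ojasiewicz inequality $f(x)-f_\star\le\|\nabla f(x)\|^2/(2\mu)$ holds. One way to obtain it is to minimize the strong-convexity lower bound $f(y)\ge f(x)+\langle\nabla f(x),y-x\rangle+\frac{\mu}{2}\|y-x\|^2$ over $y$. Combining this with~\eqref{eq:momentum_start} gives $\mathcal H_0\le\|\nabla f(x)\|^2/(2\mu)\le\mathcal B_\nu$. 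This is exactly the hypothesis of Theorem~\ref{thm:invariant}, which requires only $f\in\mathcal F_{\mu,L,M_\nu}^{n,\nu}$ and $\tau>0$. Both are assumed here, so its conclusions apply: invariance, the bound $\|v_{k+1}\|\le R_\nu$, linear decay at rate $1-\lambda$, and convergence to $(x_\star,0)$.

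\emph{Finite warm-up.} Let $y_{j+1}=y_j-\frac1L\nabla f(y_j)$ be gradient descent from an arbitrary $y_0$. The Hessian bound $\nabla^2 f\preceq LI$ gives the descent lemma
\[
f(y_{j+1})\le f(y_j)-\tfrac{1}{2L}\|\nabla f(y_j)\|^2 .
\]
Applying the PL inequality to the gradient term yields $f(y_{j+1})-f_\star\le(1-\mu/L)\bigl(f(y_j)-f_\star\bigr)$. Hence $f(y_j)-f_\star\le(1-1/\kappa)^j\bigl(f(y_0)-f_\star\bigr)$.

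To turn this into a gradient bound, use $L$-smoothness: $\|\nabla f(y)\|^2\le 2L\bigl(f(y)-f_\star\bigr)$. This follows from the descent lemma applied at $y-\frac1L\nabla f(y)$ together with $f\ge f_\star$. Therefore
\[
\|\nabla f(y_j)\|^2\le 2L(1-1/\kappa)^j\bigl(f(y_0)-f_\star\bigr).
\]
The right-hand side tends to $0$. The threshold $2\mu\mathcal B_\nu$ is strictly positive, because $D,N,\gamma>0$ and $R_\nu>0$ when $\tau>0$ and $t=d/(2b)>0$. So condition~\eqref{eq:momentum_start} holds for all
\[
j\ge \log\!\Bigl(\tfrac{L(f(y_0)-f_\star)}{\mu\mathcal B_\nu}\Bigr)\Big/\log\!\bigl(\tfrac{1}{1-1/\kappa}\bigr),
\]
which is finite.

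There is no real obstacle in this argument. The only points needing care are that $\mathcal B_\nu>0$, so that the threshold is nontrivial, and that the test is checked through the observable gradient rather than $f-f_\star$, which the PL inequality handles.
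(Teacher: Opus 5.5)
Your proposal is correct and follows essentially the same route as the paper: the zero-velocity reduction $\mathcal H(x,0)=f(x)-f_\star\le\|\nabla f(x)\|^2/(2\mu)\le\mathcal B_\nu$ to invoke Theorem~\ref{thm:invariant}, then the descent-lemma contraction $f(y_{j+1})-f_\star\le(1-\mu/L)(f(y_j)-f_\star)$ combined with $\|\nabla f\|^2\le 2L(f-f_\star)$, giving the same warm-up bound as Proposition~\ref{app:prop:warmup}. The only omission is the trivial case $f(y_0)=f_\star$, where your logarithm is undefined but the test already holds at $j=0$.
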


Indeed, gradient descent converges to $x_\star$, so its gradient norm
tends to zero, while $\mathcal B_\nu>0$. This gives a concrete
procedure: run gradient descent until~\eqref{eq:momentum_start}
holds, reset the velocity to zero, and start heavy-ball with the target
parameters. Appendix~\ref{app:asymptotic} gives an explicit bound on
the warm-up length.

\paragraph{Local rate.}
\label{sec:local_rate}
Define
$q:=(\sqrt L-\sqrt\mu)/(\sqrt L+\sqrt\mu)$,
$\gamma_\star:=4/(\sqrt L+\sqrt\mu)^2$, and
$\beta_\star:=q^2$.
These parameters satisfy $\tau>0$. After startup, convergence to
$(x_\star,0)$ allows the local linearization to determine the
asymptotic rate:
\begin{equation}
    \limsup_{k\to\infty}
    \|(x_k-x_\star,v_k)\|_2^{1/k}
    \le q.
    \label{eq:startup_rate}
\end{equation}
Thus, the startup criterion provides a computable route from an
arbitrary position to the accelerated local asymptotic rate bound.
A finite warm-up does not change this root rate.
Appendix~\ref{app:asymptotic} proves~\eqref{eq:startup_rate},
including the case of repeated characteristic roots.

\subsection{Radius lower bound}

To convert the energy region into a ball around equilibrium, define
$C_H:=\max\{3L/4,\,1/(2\gamma)+L/4\}$.
The curvature bound and Cauchy--Schwarz inequality give
$\mathcal H(x,v)\le C_H\|(x-x_\star,v)\|_2^2$.
Consequently, every state satisfying
\[
    \|(x-x_\star,v)\|_2
    \le
    R_\nu\sqrt{\frac{D}{4\gamma N C_H}}
\]
belongs to the invariant region. By~\eqref{eq:uniform_radius},
\begin{equation}
    r_{\mathrm{univ}}
    \ge
    R_\nu\sqrt{\frac{D}{4\gamma N C_H}}.
    \label{eq:radius_lower_bound}
\end{equation}

\begin{corollary}[Radius lower bound]
\label{cor:radius_lower}
Fix $0<\mu<L$, $\gamma>0$ with $\gamma L<4$, and $0<\nu\le1$.
There exist constants $c,d_0>0$, depending only on
$\mu,L,\gamma$, and $\nu$, such that, for every $n\ge1$,
$M_\nu>0$, and $0<d<d_0$,
\[
    r_{\mathrm{univ}}
    \ge
    c\left(\frac{d}{M_\nu}\right)^{1/\nu}.
\]
\end{corollary}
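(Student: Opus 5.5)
The plan is to start from the explicit bound \eqref{eq:radius_lower_bound}, which already holds whenever $\tau>0$, and track how each factor scales as $d\downarrow0$ with $\mu,L,\gamma,\nu$ fixed. The only work is to show that every quantity other than $t$ in $R_\nu\sqrt{D/(4\gamma N C_H)}$ stays bounded above and away from zero on an interval $0<d<d_0$. We also need $t\asymp d$; the remaining factor $M_\nu^{-1/\nu}$ then appears exactly.

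\textbf{Step 1: choose $d_0$.} Since $\gamma L<4$, set $\tau_0:=(4-h)/2>0$. Because $b=2-d$, we have $\tau=2b-h=4-h-2d$. Choosing $d_0:=\min\{1/2,\,(4-h)/4\}$ gives $\tau\ge\tau_0$ and $\tau\le 4$ for all $0<d<d_0$, so the hypothesis $\tau>0$ of Theorem~\ref{thm:invariant} holds.

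\textbf{Step 2: bound each factor.}
\begin{itemize}
\item On $(0,d_0)$ we have $b\in[3/2,2]$, so $t=d/(2b)\in[d/4,d/3]$.
\item Hence $R_\nu\ge\bigl((\nu+2)\tau_0 d/(4\gamma M_\nu)\bigr)^{1/\nu}=c_1(d/M_\nu)^{1/\nu}$, with $c_1$ depending only on $\gamma,h,\nu$.
\item Next, $D=\tau(1-t)^2+td\ge\tau_0(1-1/6)^2$, using $t\le d_0/3\le1/6$.
\item Also $N=2\beta^2+hd/2\le 2+h$.
\item Finally, $C_H$ is independent of $d$.
\end{itemize}
Therefore
\[
\sqrt{\frac{D}{4\gamma N C_H}}\ \ge\ c_2:=\sqrt{\frac{(25/36)\tau_0}{4\gamma(2+h)C_H}}>0.
\]
Plugging these into \eqref{eq:radius_lower_bound} gives $r_{\mathrm{univ}}\ge c_1c_2(d/M_\nu)^{1/\nu}$ with $c:=c_1c_2$.

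\textbf{Step 3: uniformity.} None of $c_1$, $c_2$, or $d_0$ involves $n$ or $M_\nu$, so the bound holds for every dimension and every $M_\nu>0$. This is consistent with the scaling law \eqref{eq:radius_scaling}.

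The only genuinely delicate point is the validity of \eqref{eq:radius_lower_bound} itself. It rests on $\mathcal H\le C_H\|(x-x_\star,v)\|_2^2$, which needs $A/(4\gamma)\le 1/(4\gamma)\cdot b\le 1/(2\gamma)$ and $|1-t|\le1$, together with Theorem~\ref{thm:invariant}. Both are already given earlier, so beyond careful bookkeeping of the constants in the small-$d$ regime there is no real obstacle.
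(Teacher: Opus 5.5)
Your proposal is correct and is essentially the paper's argument. You bound $t\ge d/4$, $\tau\ge(4-\gamma L)/2$, $D\ge\frac{25}{36}\tau_0$, and $N\le 2+h$ uniformly on $0<d<\min\{1/2,(4-\gamma L)/4\}$, then substitute into \eqref{eq:radius_lower_bound}. The paper does exactly this, with the same threshold and explicit bounds, in Appendix~\ref{app:holder}. Its version in Appendix~\ref{app:energy} instead passes to the limits $t/d\to1/4$, $\tau,D\to\Delta$, $N\to2$ and takes $c$ as half the limiting ratio.
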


To see the damping dependence, set $\Delta:=4-\gamma L>0$.
As $d\downarrow0$, we have
$t/d\to1/4$, $\tau\to\Delta$, $D\to\Delta$, and $N\to2$.
Substituting these limits into~\eqref{eq:radius_lower_bound}
gives the stated scale, with constants independent of dimension.

This establishes uniform convergence at distance
$c(d/M_\nu)^{1/\nu}$ from equilibrium.
Section~\ref{sec:cycles} constructs nonconvergent states at the same
order, completing the two-sided characterization.

\section{Attracting Cycles}
\label{sec:cycles}

Section~\ref{sec:energy} proves uniform convergence whenever the
initialization error is at most $c(d/M_\nu)^{1/\nu}$.
We now ask whether this scale can be enlarged.
By~\eqref{eq:uniform_radius}, a single admissible objective with a
nonconvergent initial state at distance $R$ from equilibrium implies
$r_{\mathrm{univ}}\le R$.

We construct such states on periodic orbits. The construction must
simultaneously satisfy the curvature and Hessian regularity bounds
and place the orbit at the required distance from equilibrium.
The resulting cycles are locally attracting, so nearby initial states
also approach persistent oscillations.

\begin{theorem}[Sharp attraction scale]
\label{thm:sharp_radius}
Fix $0<\mu<L$, $\gamma>0$ with $\gamma L<4$, and $0<\nu\le1$.
There exist constants $c,C,d_0>0$, depending only on
$\mu,L,\gamma$, and $\nu$, such that, for every $n\ge1$,
$M_\nu>0$, and $0<d<d_0$,
\[
    c\left(\frac{d}{M_\nu}\right)^{1/\nu}
    \le
    r_{\mathrm{univ}}
    \le
    C\left(\frac{d}{M_\nu}\right)^{1/\nu}.
\]
The same bounds hold when the objective class is restricted to
$C^\infty$ functions. The upper bound is witnessed by locally
attracting periodic orbits.
\end{theorem}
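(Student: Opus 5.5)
The lower bound is Corollary~\ref{cor:radius_lower}; it restricts to the $C^\infty$ subclass automatically, since a uniform guarantee over a larger class implies one over any subclass. So the work is the upper bound. By the scaling law~\eqref{eq:radius_scaling} I may take $M_\nu=1$. By embedding, it suffices to work with $n=1$: if $\phi$ is a scalar witness, then $f(x)=\phi(x_1)+\tfrac{\mu}{2}\sum_{i\ge2}x_i^2$ lies in the class with the same constants, and the initial state $(x_1,0,\dots,0)$ reproduces the scalar orbit. The plan is therefore to build, for each small $d$, a $C^\infty$ scalar function $\phi$ with $\phi''\in[\mu,L]$, $\nu$-H\"older constant at most $1$, and a nonconvergent periodic orbit of heavy-ball whose points lie at distance $O(d^{1/\nu})$ from $(x_\star,0)=(0,0)$.

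\textbf{Cycle interpolation.} Fix a period $p$ and a target orbit $x_0,\dots,x_{p-1}$ with velocities $v_j=x_j-x_{j-1}$ (indices mod $p$). Heavy-ball prescribes the gradient values $\phi'(x_j)=(\beta v_j-v_{j+1})/\gamma$. I would choose an orbit of the form $x_j=\varepsilon\,\xi_j$ with a fixed pattern $\xi$ and small amplitude $\varepsilon$. The natural choice is a rotation-like pattern, a $p$-cycle of the linearized map whose frequency matches a quadratic with curvature slightly above and below the interval, so that the orbit alternates between regions of curvature near $\mu$ and near $L$. At $\beta=1$ (undamped), a linear model with constant curvature $\ell\in(\mu,L)$ has exactly periodic orbits when $\gamma\ell$ matches a rational rotation. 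For $d>0$, each step loses energy of order $d\varepsilon^2$. A curvature variation of size $\delta$ across the orbit can reinject energy of order $\delta\varepsilon^2$; this is the mechanism that trapezoidal error $T_k\neq0$ makes possible in~\eqref{eq:base_energy}. Balancing gives $\delta\asymp d$, and H\"older regularity allows $\delta\asymp \varepsilon^\nu$. Hence $\varepsilon\asymp d^{1/\nu}$, matching the scale. Concretely, I would solve for the prescribed slopes $s_j=\phi'(x_j)$ as a perturbation in $d$ of a linear slope $\ell x_j$. I would then verify that the data $(x_j,s_j)$ admit a convex interpolant with secants in $[\mu',L']\subset(\mu,L)$ and second-order divided differences of size $O(d)$ relative to spacings of size $\varepsilon$.

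\textbf{Smooth extension.} Given interpolation data with secant slopes strictly inside $(\mu,L)$ and slope perturbations $O(d)=O(\varepsilon^\nu)$, I would build $\phi''$ as $\ell$ plus a smooth bump combination: a mollified piecewise-linear function of amplitude $O(d)$ supported on scale $\varepsilon$. This keeps $\phi''\in[\mu,L]$, and its $\nu$-H\"older seminorm is $O(d/\varepsilon^\nu)=O(1)$, which becomes at most $1$ after adjusting the constant in $\varepsilon=c_0 d^{1/\nu}$. I would set $\phi''=\ell$ outside a neighborhood of the orbit, so that $\phi$ is globally in the class and $C^\infty$. Matching the $p$ gradient constraints with finitely many bump coefficients is a linear system that I would show is solvable uniformly for small $d$.

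\textbf{Attraction and the main obstacle.} The orbit's distance from equilibrium is $O(\varepsilon)=O(d^{1/\nu})$, which gives $r_{\mathrm{univ}}\le C(d/M_\nu)^{1/\nu}$. Local attraction follows from computing the monodromy matrix, the product of Jacobians $\begin{pmatrix}1-\gamma\phi''(x_j)+\beta&-\beta\\1&0\end{pmatrix}$ in $(x_k,x_{k-1})$ form, and arranging the values of $\phi''$ at the cycle points so that its spectral radius is below one; having $\phi''(x_j)$ free, as the bump values at the nodes, gives the needed freedom. The main obstacle is the period/frequency bookkeeping. The cycle must exist with $\gamma L<4$ covering all admissible $\gamma$, using curvatures only in $[\mu,L]$, so one must pick the period $p$ (depending on $\gamma,\mu,L$ only) and verify that the net energy injection over one period is of order $\delta\varepsilon^2$ with a nonzero constant. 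I would try first a period-$2$ or period-$3$ pattern in the style of \citet{lessard2016}, computing the first-order energy balance explicitly, and fall back to a general rational rotation number in $(\arccos(1-\gamma L/2),\arccos(1-\gamma\mu/2))$ if needed.
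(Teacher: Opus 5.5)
Your route is essentially the paper's. Prescribe a rotation orbit $x_j=\varepsilon\cos u_j$, $u_j=j\theta-\varphi$, at a resonant angle $\theta=2\pi m/K$ with $\gamma\mu<2(1-\cos\theta)<\gamma L$. Read off the required gradients $\gamma\phi'(x_j)=\varepsilon\bigl(ba\cos u_j+ds\sin u_j\bigr)$, where $a=1-\cos\theta$, $s=\sin\theta$, $b=1+\beta$. Realize them as $\phi'(x)=\frac{ba}{\gamma}x+\frac{d\varepsilon s}{\gamma}H(x/\varepsilon)$ with one fixed profile $H$ that is flat near the nodes. Then $[\phi'']_\nu\asymp d/\varepsilon^\nu$ and every node Jacobian is the same stable matrix.

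However, the step you leave unspecified is the one that can fail. The data must define a function, so the nodes $\cos u_j$ must be pairwise distinct. For the natural phase $\varphi=0$ they are not: $x_{K-1}=x_1$, while the prescribed gradients there differ by $2d\varepsilon s^2/\gamma\neq0$. So no interpolant exists for any $d>0$, and your secant verification cannot succeed. The paper takes $\varphi=\pi/(2K)$ with $\gcd(m,K)=1$, which makes all $K$ nodes distinct by a parity argument. The resulting fixed node spacing bounds the cotangent term in the normalized secants $ba-ds\cot\bigl(\tfrac{u_i+u_j}{2}\bigr)$, and makes $\|H'\|_\infty$ and $[H']_\nu$ constants independent of $d$.

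Conversely, the ``net energy injection'' you single out as the main obstacle needs no verification. Once an admissible function interpolates the prescribed gradients, the orbit is a cycle by construction. Because the correction is exactly linear in $d$, a single profile scaled by $d$ works for every small $d$, with no $d$-dependent linear system to solve.

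A few smaller points. Period $2$ cannot work: a $2$-cycle $x_0\leftrightarrow x_1$ forces $\gamma\bigl(\phi'(x_1)-\phi'(x_0)\bigr)/(x_1-x_0)=2b$, which exceeds $\gamma L$ whenever $\tau>0$. In your perturbative framework, period $3$ needs $\gamma\mu<3<\gamma L$. So the general resonance is required rather than a fallback, and the admissible angles are $\theta\in\bigl(\arccos(1-\gamma\mu/2),\arccos(1-\gamma L/2)\bigr)$; you wrote this interval reversed.

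Also drop the picture of an orbit alternating between curvatures near $\mu$ and near $L$. An $O(1)$ curvature swing across an orbit of size $O(\varepsilon)$ forces $\varepsilon\gtrsim((L-\mu)/M_\nu)^{1/\nu}$ regardless of $d$. The sharp scale instead needs a single resonant curvature strictly inside $(\mu,L)$ perturbed by $O(d)$, which is what your later paragraphs in fact use.

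Finally, for local attraction when $n>1$, check that the added coordinates are curvature-$\mu$ modes; they are stable because $0<\gamma\mu<2b$.
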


\paragraph{Damping and initialization.}
The theorem quantifies the relation between damping and initialization
accuracy. In the stated small-damping regime, guaranteeing convergence
throughout a ball of radius $r$ requires $d\gtrsim M_\nu r^\nu$,
and a sufficiently large constant multiple of $M_\nu r^\nu$
is sufficient. More accurate initialization therefore permits smaller
damping, with the exponent determined by Hessian regularity.

The lower bound is Corollary~\ref{cor:radius_lower}.
We prove the upper bound in one dimension and then embed the
construction into higher dimensions.

\subsection{Cycle interpolation}

Consider first the undamped iteration, $\beta=1$, on a quadratic with
curvature $2(1-\cos\theta)/\gamma$. Its recurrence is
$x_{j+1}=2\cos\theta\,x_j-x_{j-1}$, which admits oscillations
$x_j:=R\cos(j\theta-\phi)$.

The map $\theta\mapsto2(1-\cos\theta)$ is continuous from $(0,\pi)$
onto $(0,4)$. Since $0<\gamma\mu<\gamma L<4$ and rational multiples
of $2\pi$ are dense, we can choose coprime positive integers $m,K$
such that $\theta:=2\pi m/K$ satisfies
\begin{equation}
    0<\theta<\pi,
    ~~
    \gamma\mu<2(1-\cos\theta)<\gamma L.
    \label{eq:resonance}
\end{equation}
This produces a periodic quadratic oscillation with curvature strictly
inside the prescribed interval.

Now restore damping, $\beta=1-d$. Set
$a:=1-\cos\theta$, $s:=\sin\theta$, and $u_j:=j\theta-\phi$.
For the positions $x_j=R\cos u_j$ to remain a heavy-ball cycle, the
objective must satisfy
\begin{equation}
    \begin{aligned}
        \gamma f'(R\cos u_j)
        &=b x_j-x_{j+1}-\beta x_{j-1}\\
        &=R\bigl(ba\cos u_j+ds\sin u_j\bigr),
    \end{aligned}
    \label{eq:cycle_interpolation}
\end{equation}
with indices interpreted modulo $K$.
Thus, constructing a cycle amounts to interpolating these prescribed
gradient values while preserving the function-class bounds.

The first term in~\eqref{eq:cycle_interpolation} comes from a quadratic
of curvature $ba/\gamma$. The second compensates for damping and has
magnitude of order $dR$. Realizing this correction over a spatial
scale $R$ requires a Hessian perturbation of order $d$ and a Hessian
H\"older constant of order $d/R^\nu$.
Matching this constant to $M_\nu$ leads to
$R\asymp(d/M_\nu)^{1/\nu}$.

\subsection{Smooth realization}

Choose $\phi:=\pi/(2K)$ and define $z_j:=\cos u_j$.
These nodes are distinct. Fix a function $H\in C^\infty(\mathbb R)$
satisfying $H(z_j)=\sin u_j$, $\|H\|_\infty\le1$, and such that
$H$ is constant in a neighborhood of each node.
One obtains such a function by sorting the nodes, joining consecutive
prescribed values with flat smooth transitions, and extending
constantly beyond the outermost nodes.

For a scalar function $p$, define its H\"older seminorm by
$[p]_\nu:=\sup_{x\ne y}|p(x)-p(y)|/|x-y|^\nu$.
Set $K_{H,\nu}:=[H']_\nu\in(0,\infty)$.
This constant depends on the fixed interpolation profile and $\nu$,
but not on $d$ or $M_\nu$.
Define
\begin{equation}
    \begin{aligned}
        R_c
        &:=
        \left(\frac{dsK_{H,\nu}}{\gamma M_\nu}\right)^{1/\nu},\\
        f'(x)
        &:=
        \frac{ba}{\gamma}x
        +\frac{dR_cs}{\gamma}H(x/R_c).
    \end{aligned}
    \label{eq:cycle_construction}
\end{equation}
The second line defines $f$ up to an additive constant.

Differentiation gives
$\gamma f''(x)=(2-d)a+dsH'(x/R_c)$ and
\[
    [f'']_\nu
    =
    \frac{dsK_{H,\nu}}{\gamma R_c^\nu}
    =
    M_\nu.
\]
Moreover, the curvature bounds hold whenever
\[
    d\bigl(a+s\|H'\|_\infty\bigr)
    \le
    \min\{2a-\gamma\mu,\ \gamma L-2a\}.
\]
Both margins are positive, so this condition
holds for all sufficiently small $d$.
Hence, $f\in\mathcal F_{\mu,L,M_\nu}^{1,\nu}\cap C^\infty$.

The positions $x_j:=R_cz_j$ satisfy
\eqref{eq:cycle_interpolation} exactly and therefore form a
$K$-periodic orbit. Since $H$ is flat near every node,
$f''(x_j)=ba/\gamma$. In position--velocity coordinates, the
heavy-ball map has the same Jacobian at every point of the cycle:
\[
    J
    :=
    \begin{pmatrix}
        1-ba & \beta\\
        -ba & \beta
    \end{pmatrix}.
\]
Its characteristic polynomial is
$p(\zeta):=\det(\zeta I-J)
=\zeta^2-b\cos\theta\,\zeta+\beta$.
We have
$p(1)=b(1-\cos\theta)>0$,
$p(-1)=b(1+\cos\theta)>0$, and $0\le\beta<1$.
These conditions place both roots strictly inside the unit disk.
The return map has derivative $J^K$, so the cycle is locally
attracting.

It remains to measure its distance from equilibrium.
Strong convexity and~\eqref{eq:cycle_construction} imply
\[
    |x_\star|
    \le
    \frac{|f'(0)|}{\mu}
    \le
    \frac{dR_cs}{\gamma\mu}.
\]
Every cycle position satisfies $|x_j|\le R_c$, and its velocity
$v_j:=x_j-x_{j-1}$ satisfies $|v_j|\le2R_c$.
Therefore,
\begin{equation}
    \begin{aligned}
        \|(x_j-x_\star,v_j)\|_2
        &\le
        R_c\sqrt{\left(1+\frac{ds}{\gamma\mu}\right)^2+4}\\
        &\le
        C\left(\frac{d}{M_\nu}\right)^{1/\nu}.
    \end{aligned}
    \label{eq:cycle_state_bound}
\end{equation}
These states do not converge to equilibrium, which proves the
one-dimensional upper bound.

For $n>1$, define
$F(x):=f(x_1)+(\mu/2)\sum_{\ell=2}^n x_\ell^2$.
The function $F$ has the same curvature and Hessian regularity bounds,
and the cycle persists with all additional coordinates and velocities
equal to zero. The added quadratic modes are strictly stable, so the
cycle remains locally attracting in the full state space.
This completes the upper bound in
Theorem~\ref{thm:sharp_radius}.
Appendix~\ref{app:holder} supplies the construction details.
Appendix~\ref{app:cycle_size} gives a complementary lower bound on
the diameter of every nonconstant periodic orbit.

\subsection{An explicit three-cycle}
\label{sec:explicit_cycle}

For Lipschitz Hessians, write $M:=M_1$.
The choice $\theta=2\pi/3$ gives an undamped quadratic curvature
$3/\gamma$. Taking $\mu=1$, $L=7/2$, and $\gamma=1$ places this
curvature strictly inside the admissible interval and gives a
three-cycle with explicit constants. This parameter choice also
underlies the numerical experiments.

Let $0<d\le1/10$. Choose a smooth profile $H$ that is flat near
$-1,0,1$ and satisfies
$H(-1)=H(1)=1/2$, $H(0)=-1$,
$\|H'\|_\infty\le3$, and $\|H''\|_\infty\le9$.
Define $R_c:=9d/M$ and
\[
    f'(x)
    :=
    \left(3-\frac32d\right)x+dR_cH(x/R_c).
\]
This defines a smooth function in
$\mathcal F_{1,7/2,M}^{1,1}$ with the exact cycle
$-R_c\to0\to R_c\to-R_c$.
At each cycle point, $f''=3b/2$, and the corresponding return map is
strictly stable.

The minimizer satisfies $0<x_\star\le dR_c$.
At the cycle phase $(x,v)=(0,R_c)$, the distance from equilibrium is
at most $R_c\sqrt{1+d^2}<10d/M$.
Combining this upper bound with the energy region gives the following
fully explicit specialization.

\begin{corollary}[Explicit radius bounds]
\label{cor:explicit_radius}
Let $\mu=1$, $L=7/2$, $\gamma=1$, $M>0$, and
$9/10\le\beta<1$. For every dimension $n\ge1$,
\[
    \frac{1-\beta}{40M}
    \le
    r_{\mathrm{univ}}
    \le
    \frac{10(1-\beta)}{M}.
\]
The bounds also hold on the $C^\infty$ subclass.
Every initial state in the ball of radius $(1-\beta)/(40M)$
satisfies
\[
    \mathcal H_k
    \le
    \left(1-\frac{1-\beta}{400}\right)^k\mathcal H_0.
\]
\end{corollary}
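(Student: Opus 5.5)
The plan is to specialize the general machinery to $\mu=1$, $L=7/2$, $\gamma=1$, $\nu=1$, and to track every constant explicitly for $d=1-\beta\in(0,1/10]$.

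\textbf{Lower bound.} I would evaluate the quantities behind \eqref{eq:radius_lower_bound}. With $h=7/2$ and $b=2-d$, we get $\tau=1/2-2d\ge 3/10$ and $t=d/(2b)\in[d/4,\,d/(2(2-1/10))]$. Next, $A=b-(1+3\beta)t$ and $D=\tau(1-t)^2+td$, so $D\ge \tfrac{3}{10}(1-1/38)^2\approx 0.284$. Also $N=2\beta^2+7d/4\le 2+7/40$. For the last constant, $C_H=\max\{21/8,\,1/2+7/8\}=21/8$.

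From \eqref{eq:energy_budget} with $\nu=1$, $R_1=3\tau t/M\ge 3\cdot\tfrac{3}{10}\cdot\tfrac d4/M=0.225\,d/M$. The factor is then
\[
\sqrt{D/(4NC_H)}\ge\sqrt{0.284/(4\cdot 2.175\cdot 2.625)}\approx 0.111,
\]
so the lower bound is about $0.025\,d/M\ge d/(40M)$. Since $1/40=0.025$ the margin is thin, so I would carry the bounds on $t$, $\tau$, $D$ and $N$ carefully, using exact monotonicity in $d$ rather than crude estimates. If this does not close, I would bound $D/N$ jointly, since both improve as $d\downarrow0$, and check the endpoint $d=1/10$ by direct computation.

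\textbf{Contraction rate.} By Theorem~\ref{thm:invariant}, $\lambda=d\tau/[8(7+8)]=d\tau/120\ge d\cdot(3/10)/120=d/400$. Since $1-\lambda\le 1-d/400$, the stated energy decay follows for every state in the ball, because that ball lies in the invariant region. The same computation gives $\tau\ge 3/10$, which is exactly what is needed here.

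\textbf{Upper bound.} I would use the explicit three-cycle of Section~\ref{sec:explicit_cycle}. The first step is to construct $H$ with the stated interpolation and derivative bounds, for instance from smooth flat transitions between $1/2$ at $-1$, $-1$ at $0$ and $1/2$ at $1$, each of width about $1$. With slope $3/2\cdot$(shape) the bound $\|H'\|\le 3$ holds, and $\|H''\|\le9$ is achievable with a standard bump profile.

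The second step is to verify the cycle equation \eqref{eq:cycle_interpolation} with $\theta=2\pi/3$, $a=3/2$, $s=\sqrt3/2$. The prescribed correction values $s\sin u_j$ at the nodes $-1,0,1$ must equal the values of $H$ there, up to the normalization; I expect this to require checking the phase and sign conventions.

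The third step is to check membership in the class:
\begin{itemize}
\item $f''=3-\tfrac32 d+dH'\in[3-4.5d,\,3+3d]\subset[1,7/2]$ for $d\le1/10$;
\item $[f'']_1=d\|H''\|_\infty/R_c\le 9d/R_c=M$.
\end{itemize}
The Jacobian stability argument then carries over from the general case. Since $f'(0)=dR_cH(0)=-dR_c<0$ and $f''\ge1$, we get $0<x_\star\le dR_c$. The state $(0,R_c)$ therefore lies at distance at most $R_c\sqrt{1+d^2}<10d/M$ from equilibrium and does not converge, which gives the upper bound.

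The $C^\infty$ claim holds because $f$ is smooth. Higher dimensions follow by the embedding $F(x)=f(x_1)+\tfrac12\sum_{\ell\ge2}x_\ell^2$, and the lower bound is dimension-free.

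I expect the main obstacle to be the tight numerical constant $1/40$ in the lower bound.
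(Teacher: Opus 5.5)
Your proposal is correct and follows the paper's own route. The lower bound and the rate $\lambda=d\tau/120\ge d/400$ come from specializing $\tau,t,D,N,C_H$ in \eqref{eq:radius_lower_bound} and Theorem~\ref{thm:invariant}. The upper bound comes from the explicit three-cycle with $R_c=9d/M$. You can confirm that cycle simply by iterating $(-R_c,-2R_c)\mapsto(0,R_c)\mapsto(R_c,R_c)\mapsto(-R_c,-2R_c)$, which avoids reconciling phase and sign conventions with \eqref{eq:cycle_interpolation}.

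The thin margin you worry about comes from bounding $\beta^2\le1$ separately. In fact $N=2(1-d)^2+\tfrac74 d=2-\tfrac94 d+2d^2\le2$, so $4NC_H\le21$ and $r_{\mathrm{univ}}\ge\tfrac{9d}{40M}\sqrt{D/21}>\tfrac{9d}{40M}\sqrt{7/525}=\sqrt{27/25}\,\tfrac{d}{40M}$. Your bound $N\le 87/40$ also closes, but only by about half a percent, and your rounded product $0.225\cdot0.111$ actually falls just below $0.025$.
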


Appendix~\ref{app:sharp} verifies the profile construction and the
constants. The contraction estimate follows from
$\tau=1/2-2d\ge3/10$ and
$\lambda=d\tau/120\ge d/400$.

\subsection{Global non-acceleration}
\label{sec:global_obstruction}

For $\kappa:=L/\mu\ge24$, convergence over the entire function class
from every initial state requires the fixed parameters $\gamma>0$
and $0\le\beta<1$ to satisfy $\gamma L<12(1-\beta)$.
We establish this necessary condition using cycle interpolation and
quadratic stability, then use a low-curvature quadratic to bound the
best uniform global rate.

Choose an integer $K\ge3$, set $\theta:=2\pi/K$ and
$\phi:=\theta/4$, and consider the unit-amplitude positions
$x_j:=\cos(j\theta-\phi)$.
Prescribe gradient values
$g_j:=(b x_j-x_{j+1}-\beta x_{j-1})/\gamma$
as in~\eqref{eq:cycle_interpolation}.
With $a_K:=b(1-\cos\theta)$, every pair of distinct nodes satisfies
\[
    \left|
        \gamma\frac{g_i-g_j}{x_i-x_j}-a_K
    \right|
    <4d.
\]
Thus, the period controls the center of the interval containing all
normalized secant slopes.

Suppose $12d\le h<2b$. Choose the smallest $K\ge3$ such that
$a_K\le2h/3$. Then $3h/8<a_K\le2h/3$.
The normalized secant slopes lie strictly between $h/24$ and $h$,
and hence between $\gamma\mu$ and $\gamma L$ when $\kappa\ge24$.
Smooth interpolation realizes these gradient data with curvature in
$[\mu,L]$ and produces a locally attracting cycle.
Spatial dilation then enforces any prescribed positive Hessian
H\"older bound.

These cycles exclude the parameter region $12d\le h<2b$ from any
global convergence guarantee. If $h\ge2b$, the quadratic of curvature
$L$ has a nondecaying mode. Global convergence therefore requires
$h<2b$ and $h<12d$. In this remaining region, the quadratic of
curvature $\mu$ yields the slow-convergence lower bound.

To state the rate consequence, let
$\mathcal G:=\mathcal F_{\mu,L,M_\nu}^{1,\nu}$ and
$z_k:=(x_k-x_\star,v_k)$. Define the best uniform global factor by
\begin{equation}
    \rho_\star
    :=
    \inf_{\substack{\gamma>0\\0\le\beta<1}}
    \limsup_{k\to\infty}
    \left(
        \sup_{\substack{f\in\mathcal G\\z_0\ne0}}
        \frac{\|z_k\|_2}{\|z_0\|_2}
    \right)^{1/k}.
    \label{eq:global_factor}
\end{equation}
The inner supremum takes the worst case over objectives and
initial states before the asymptotic limit; the outer infimum
optimizes the fixed parameter pair.
For $\kappa\ge24$,
\begin{equation}
    \frac{\kappa-12}{\kappa+12}
    \le
    \rho_\star
    \le
    \frac{\kappa-1}{\kappa+1}.
    \label{eq:global_nonacceleration}
\end{equation}
The upper bound is attained as a guarantee by gradient descent,
and the lower bound remains valid on the $C^\infty$ subclass.
Consequently, $1-\rho_\star=\Theta(\kappa^{-1})$:
fixed-parameter heavy-ball cannot improve the condition-number order
of the uniform global rate.
This gives an elementary scalar proof of the known global
non-acceleration
\citep{goujaud2025,fotopoulos2026}.
Appendix~\ref{app:nonaccel} contains the full argument.

The factor~\eqref{eq:global_factor} concerns heavy-ball using one
parameter pair throughout and takes the worst case over objectives
and initial states before the asymptotic limit.
The startup procedure in Corollary~\ref{cor:startup} first uses
gradient descent to select the initial state of the heavy-ball phase.
With quadratic-optimal parameters,
\eqref{eq:startup_rate} then gives an accelerated asymptotic rate
bound for each resulting trajectory.
The total gradient cost at a prescribed accuracy includes both
warm-up and transient iterations, as measured in the numerical
experiments.

\section{Numerical Experiments}
\label{sec:numerics}

We examine the invariant energy region, the spatial scale of periodic
failure, and the gradient-based startup procedure. The final experiment
uses quadratic-optimal parameters to study the local rate after startup
and its contribution to the total gradient cost.

The first three experiments use a $C^3$ objective obtained from the
three-cycle construction in Section~\ref{sec:explicit_cycle}, with
$\mu=\gamma=M_\nu=1$ and $L=7/2$:
\begin{equation}
    f'(x)
    :=
    \left(3-\frac32d\right)x+dR_cH_0(x/R_c).
    \label{eq:numerical_objective}
\end{equation}
Here $H_0(u):=-1+\frac32S(\min\{|u|,1\})$,
$S(s):=6s^5-15s^4+10s^3$, and
$R_c:=(dK_\nu)^{1/\nu}$, where
$K_\nu:=(45/8)^{1-\nu}(5\sqrt3)^\nu$
is a global upper bound on $[H_0']_\nu$.
These objectives satisfy the prescribed curvature and H\"older bounds
for all tested damping values.

Computations use double precision. Phase scans and startup comparisons
use normalized position--velocity coordinates. For the three-cycle
experiments, attractor classification requires six consecutive states
within distance $10^{-7}R_c$ of the equilibrium or the cycle.
Appendix~\ref{app:numerical_protocol} provides the protocols and
additional results.

\subsection{Energy and attraction}

We first take $\nu=1$, so $M:=M_1=1$.
Figure~\ref{fig:dynamics_scales}(a) shows the coexisting equilibrium
and cycle attractors at $\beta=0.95$.
On a $101\times101$ phase-space grid, 2643 trajectories approach the
equilibrium and 7558 approach the cycle under the finite-time
classification rule, with no unresolved states after 1800 updates.

To examine Theorem~\ref{thm:invariant} and
Corollary~\ref{cor:explicit_radius}, we sample 2048 initial states
inside the guaranteed ball of radius $d/(40M)$ across
$\beta\in\{0.9,0.95,0.99,0.999\}$.
The energy-budget and velocity bounds hold throughout 240 updates
to numerical precision.
Figure~\ref{fig:dynamics_scales}(b) shows a representative trajectory
at $\beta=0.95$: after 320 updates, its energy is
$7.48\times10^{-8}$ of its initial value and remains below the
contraction envelope. The envelope uses the sharper factor
$\widehat\lambda$ from \eqref{app:eq:lambda_sharp}.

\begin{figure*}[ht]
    \centering
    \includegraphics[width=0.8\textwidth]
        {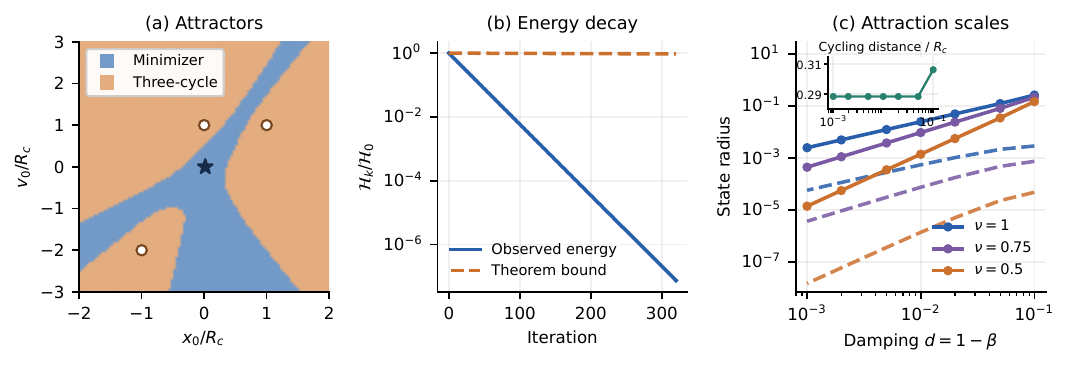}
    \caption{\small
    Attraction and initialization scales.
    (a) Finite-time phase classification at $\beta=0.95$, $\nu=1$.
    The star marks the minimizer and circles mark cycle phases.
    (b) Normalized energy and the envelope
    $(1-\widehat\lambda)^k$.
    (c) Smallest sampled cycling distance (solid) and the
    guaranteed radius from \eqref{eq:radius_lower_bound} (dashed).
    The inset gives cycling distance divided by $R_c$.
    The H\"older cases share normalized scans and use their
    respective physical spatial scales.
    }
    \label{fig:dynamics_scales}
\end{figure*}

\subsection{Cycling scale}

The construction in Section~\ref{sec:cycles} places a periodic orbit
at scale $R_c$. We examine how closely its attraction region
approaches the equilibrium by scanning nearby initial states.
Across seven damping values from $10^{-3}$ to $10^{-1}$, the
smallest sampled cycling distance ranges from $0.2885R_c$ to
$0.3064R_c$. All 43,008 scanned states are classified.
At $d=0.01$, doubling both angular and radial resolution changes
this distance by $1.23\%$.

Thus, over the tested damping range, the cycle attracts sampled
initial states at approximately $0.29R_c$ from equilibrium.
Figure~\ref{fig:dynamics_scales}(c) displays these distances alongside
the uniform guarantees from \eqref{eq:radius_lower_bound}.
Its inset shows that the normalized distance varies little with $d$.
The normalized dynamics are independent of $\nu$, so the curves for
$\nu\in\{1,3/4,1/2\}$ use the same scans with
$R_c=(dK_\nu)^{1/\nu}$.
The observed attraction geometry therefore persists under the
spatial rescalings used in the sharp-radius construction.

\subsection{Gradient-based startup}
\label{sec:numerics_startup}

We next test Corollary~\ref{cor:startup} for prescribed target
parameters $\gamma=1$ and $\beta\in\{0.9,0.95,0.99\}$.
We compare gradient descent (GD), direct heavy-ball (HB), and GD
followed by HB on 101 equally spaced initial positions in
$[-2R_c,2R_c]$, all with zero initial velocity.
GD uses step size $1/L$. The hybrid switches at the first state
satisfying \eqref{eq:momentum_start}, resets the velocity to zero,
and uses the target HB parameters thereafter.
The switch test reuses the current gradient, so each update costs
one gradient call.

Across the three momentum values and the startup budgets for
$\nu\in\{1,3/4,1/2\}$, all 909 hybrid runs reach the equilibrium
tolerance, with switching after zero to seven GD updates.
Every post-switch trajectory satisfies the energy and velocity
bounds to numerical precision.
Table~\ref{tab:startup} reports the Lipschitz cases.
\begin{figure*}[ht]
    \centering
    \includegraphics[width=0.8\textwidth]
        {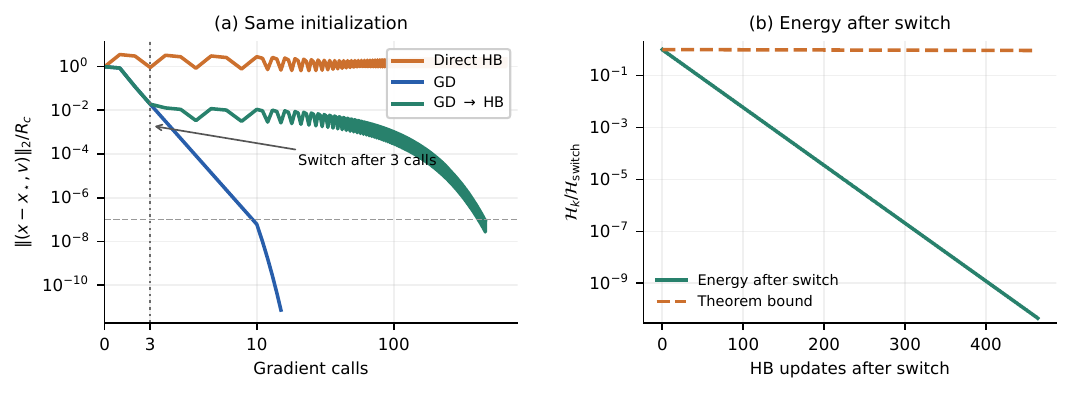}
    \caption{\small
    Startup from $(x_0,v_0)=(R_c,0)$ at
    $\gamma=1$, $\beta=0.95$, and $\nu=1$.
    (a) State error versus gradient calls; the vertical line marks
    the switch and the horizontal line marks the tolerance.
    The horizontal axis is linear up to 10 calls and logarithmic
    thereafter.
    (b) Post-switch energy and its envelope using
    $\widehat\lambda$ from \eqref{app:eq:lambda_sharp}.
    All methods use the same state-error tolerance and six-state
    confirmation rule.
    }
    \label{fig:startup}
\end{figure*}

\begin{table}[t]
    \centering
    \small
    \setlength{\tabcolsep}{3pt}
    \begin{tabular}{ccccc}
        \toprule
        $\beta$ & HB cycles & GD warm-up & GD calls & Hybrid calls\\
        \midrule
        $0.90$ & $83/101$ & $1$--$5$ & $16$ & $213$\\
        $0.95$ & $83/101$ & $1$--$4$ & $15$ & $426$\\
        $0.99$ & $84/101$ & $0$--$4$ & $15$ & $2239$\\
        \bottomrule
    \end{tabular}
    \caption{\small
    Startup on the three-cycle family, $\nu=1$.
    GD and the hybrid reach the tolerance from all 101 starts.
    Warm-up gives the range of GD updates before switching.
    The final two columns give median total gradient calls,
    including the six-state stopping confirmation.
    }
    \label{tab:startup}
\end{table}

Figure~\ref{fig:startup} illustrates
$(x_0,v_0)=(R_c,0)$ at $\beta=0.95$ and $\nu=1$.
The hybrid switches after three GD calls and reaches the stopping
criterion after 467 total calls. GD alone takes 15 calls, while
direct HB approaches the cycle.
For these target parameters, the startup test eliminates periodic
failure, while GD has the lower total gradient cost.

\subsection{Local rate}
\label{sec:numerics_local}

To examine \eqref{eq:startup_rate}, we use nonquadratic objectives
with an exactly quadratic neighborhood of the minimizer.
For $\kappa\in\{4,16,64,256\}$, set
$a_\kappa:=15(\kappa-1)/8$ and define $f_\kappa$ by
\begin{equation}
    f_\kappa''(x)
    :=
    1+(\kappa-1)
    S\!\left(
        \operatorname{clip}\!\left(\frac{|x|}{a_\kappa}-1,0,1\right)
    \right),
    \label{eq:numerical_local_objective}
\end{equation}
with $f_\kappa(0)=f_\kappa'(0)=0$.
Here $\operatorname{clip}(z,0,1):=\min\{1,\max\{0,z\}\}$.
These objectives satisfy $\mu=M=1$, $L=\kappa$, and
$f_\kappa(x)=x^2/2$ on $|x|\le a_\kappa$.

Starting from $(2a_\kappa,0)$, the hybrid uses GD with step $1/L$
until \eqref{eq:momentum_start} holds and then switches to the
quadratic-optimal parameters from Section~\ref{sec:energy}.
All four switches occur inside the quadratic neighborhood.
Writing $T$ for the switching time, the subsequent positions obey
$x_{T+j}=x_T[1+(1-q)j]q^j$.
The computed position--velocity trajectories agree with this
repeated-root solution to relative error below $3.1\times10^{-14}$.
The formula exhibits the linear prefactor and asymptotic root factor $q$.
Figure~\ref{app:fig:polyak_startup} shows the trajectories and
local decay.

Table~\ref{tab:polyak_startup} separates warm-up and HB costs and
compares their sum with GD using step
$2/(L+\mu)$. Each method uses the state-error tolerance
$10^{-10}a_\kappa$ and six consecutive confirmations.
The hybrid has the lower total cost for these initial states,
although warm-up accounts for most of its cost at larger $\kappa$.
Appendix~\ref{app:numerical_protocol} also reports GD with step
$1/L$ and the additional initial positions
$x_0/a_\kappa\in\{1.5,3\}$.

\begin{table}[ht]
    \centering
    \small
    \setlength{\tabcolsep}{3pt}
    \begin{tabular}{ccccc}
        \toprule
        $\kappa$ & Warm-up & HB phase & Hybrid total & GD total\\
        \midrule
        $4$   & $8$    & $28$  & $36$   & $51$\\
        $16$  & $66$   & $49$  & $115$  & $190$\\
        $64$  & $399$  & $85$  & $484$  & $742$\\
        $256$ & $2129$ & $148$ & $2277$ & $2953$\\
        \bottomrule
    \end{tabular}
    \caption{\small
    Gradient calls from $(2a_\kappa,0)$.
    The hybrid uses GD with step $1/L$ before the gradient test
    and quadratic-optimal HB afterward.
    The standalone GD baseline uses step $2/(L+\mu)$.
    All totals include the stopping confirmation.
    }
    \label{tab:polyak_startup}
\end{table}

\section{Conclusion}
\label{sec:conclusion}

We establish the sharp uniform attraction scale of heavy-ball
by matching explicit convergence guarantees with smooth
attracting cycles, whose construction also yields an elementary
proof of global non-acceleration.
The energy bound gives a gradient-based startup rule that,
after finite GD warm-up, guarantees convergence and an
accelerated local asymptotic rate at quadratic-optimal parameters.
The sharp scaling fixes the curvature bounds and step size,
leaving their joint interaction with vanishing damping unresolved.
The startup rule requires prescribed curvature and Hessian
regularity bounds, and its local rate guarantee does not establish
an accelerated total gradient complexity.
Developing adaptive startup rules with end-to-end complexity
guarantees is a natural next step.

\section*{AI Use Statement}
ChatGPT 5.6 Sol and 6 Astra assisted with checking mathematical proofs, editing code for numerical examples, polishing the manuscript, and conducting a simulated peer review. The authors take full responsibility for all content in this paper.

\clearpage
\begingroup
\raggedbottom
\bibliographystyle{plainnat}
\bibliography{references}
\clearpage
\endgroup
\clearpage
\onecolumn
\appendix
\numberwithin{equation}{section}

\section{Related Work}
\label{sec:related_work}

\paragraph{Acceleration and stability.}
Heavy-ball and Nesterov's accelerated schemes provide two classical
approaches to momentum-based acceleration
\citep{polyak1964,nesterov2018}.
For quadratic objectives, robust-control arguments establish
heavy-ball's worst-case asymptotic optimality within a multistep
framework \citep{ugrinovskii2023}, while discrete Lyapunov
constructions explain accelerated quadratic dynamics
\citep{orvieto2024}.
Beyond quadratics, global convergence is available under suitable
parameter restrictions \citep{ghadimi2015}, and local asymptotic
acceleration extends to objectives satisfying the
Polyak--{\L}ojasiewicz inequality \citep{kassing2026}.
Differential-equation models clarify the roles of damping and
gradient correction \citep{su2016,shi2022}, while integral quadratic
constraints provide systematic tools for certifying convergence
rates \citep{lessard2016,fazlyab2018}.
We complement these rate analyses by quantifying the initialization
accuracy needed for convergence uniformly over a class with
prescribed Hessian regularity.

\paragraph{Cycles and non-acceleration.}
Periodic counterexamples show that stability on quadratics does not
ensure global convergence \citep{lessard2016}.
Smooth strongly convex interpolation \citep{taylor2017} supports a
constructive search for such failures \citep{goujaud2023}.
Using cyclic trajectories, \citet{goujaud2025} rule out accelerated
worst-case rates for fixed-parameter heavy-ball on general smooth
strongly convex objectives.
The one-dimensional question was subsequently isolated in
\citet{riddles2025} and resolved through projected cycles in the
research note of \citet{fotopoulos2026}.
For smooth convex objectives, \citet{ma2026} establish a separate
limitation for predetermined time-varying parameters.
Here, we construct smooth attracting cycles at controlled distances
from equilibrium under prescribed Hessian regularity bounds.
Their distance matches our convergence guarantee, identifying the
sharp initialization scale.
The same scalar interpolation also yields an elementary proof of
global non-acceleration.

\paragraph{Restart and startup.}
Adaptive restart uses function or gradient information to control
oscillations in accelerated schemes \citep{odonoghue2015}.
For nonconvex optimization, restarted accelerated gradient and
heavy-ball methods admit improved stationarity complexity bounds
\citep{li2023restart}, including parameter-free and
Hessian-regularity-adaptive variants \citep{marumo2024,marumo2025}.
Other approaches modify the iteration through over-relaxation to
obtain global acceleration \citep{wei2025}, or use curve searches to
globalize heavy-ball-type steps \citep{donnini2026}.
In our strongly convex setting, a gradient-based test determines
when a finite gradient-descent warm-up has reached an explicit
energy region.
We then reset the velocity and keep the target heavy-ball
parameters fixed.
This connects a verifiable startup condition with the subsequent
local asymptotic rate.

\section{Energy Guarantees}
\label{app:energy}

We prove the estimates used in Section~\ref{sec:energy},
Theorem~\ref{thm:invariant}, Corollary~\ref{cor:radius_lower}, and
the scaling identity~\eqref{eq:radius_scaling}.
Throughout the energy analysis,
$f\in\mathcal F_{\mu,L,M_\nu}^{n,\nu}$,
$0\le\beta<1$, and $\gamma L<2(1+\beta)$.
We retain $b:=1+\beta$, $d:=1-\beta$, $h:=\gamma L$, and
$\tau:=2b-h>0$.
All bounds below are independent of dimension.

\subsection{Energy identities}

Fix an iteration and write $x:=x_k$, $v:=v_k$,
$w:=v_{k+1}$, $g:=\nabla f(x)$, and $g^+:=\nabla f(x+w)$.
Then $\gamma g=\beta v-w$.
Define
$\overline H:=\int_0^1\nabla^2f(x+uw)\,\mathrm du$.
The fundamental theorem of calculus gives
$g^+-g=\overline H w$, with
$\mu I\preceq\overline H\preceq LI$.
For the trapezoidal error
$T:=\tfrac12\langle g+g^+,w\rangle-[f(x+w)-f(x)]$,
the base energy satisfies
\[
\begin{aligned}
    E(x+w,w)-E(x,v)
    &=\frac12\langle g,v+w\rangle
      +\frac b{4\gamma}(\|w\|^2-\|v\|^2)-T\\
    &=\frac1{4\gamma}
      \bigl[2\langle\beta v-w,v+w\rangle
            +b(\|w\|^2-\|v\|^2)\bigr]-T\\
    &=-\frac d{4\gamma}\|v+w\|^2-T.
\end{aligned}
\]
This proves~\eqref{eq:base_energy}.

To derive the correction, initially allow $t$ to be arbitrary and set
$A_t:=b-(1+3\beta)t$. Define
\[
    \mathcal H_t(x,v)
    :=f(x)-f_\star+\frac{A_t}{4\gamma}\|v\|^2
      -\frac{1-t}{2}\langle v,g\rangle.
\]
Substituting the definition of $T$ and $g^+=g+\overline H w$ gives
\[
\begin{aligned}
  4\gamma\bigl[\mathcal H_t(x+w,w)-\mathcal H_t(x,v)+T\bigr]
  ={}&2\langle\beta v-w,(1-t)v+(1+t)w\rangle\\
     &+A_t(\|w\|^2-\|v\|^2)
       +2\gamma t\langle w,\overline H w\rangle.
\end{aligned}
\]
The coefficients of $\|v\|^2$, $\langle v,w\rangle$, and
$\|w\|^2$ in the first two terms are, respectively,
$-d+bt$, $2(-d+bt)$, and $-d-3bt$.
Thus the choice of $A_t$ collects these terms into
$-(d-bt)\|v+w\|^2-4bt\|w\|^2$, yielding
\begin{equation}
    \mathcal H_t(x+w,w)-\mathcal H_t(x,v)
    =-\frac{d-bt}{4\gamma}\|v+w\|^2
     -\frac t{2\gamma}
      \langle w,(2bI-\gamma\overline H)w\rangle-T.
    \label{app:eq:general_energy}
\end{equation}
For $0<t<d/b$, both quadratic terms dissipate energy.
From now on, fix $t:=d/(2b)$, $A:=A_t$, and
$\mathcal H:=\mathcal H_t$, as in~\eqref{eq:corrected_energy}.
Then $d-bt=d/2$, so~\eqref{app:eq:general_energy} becomes
\eqref{eq:corrected_identity}.

For the remainder estimate, define $\phi(u):=f(x+uw)$ on $[0,1]$.
Integration by parts and reflection about $1/2$ give
\[
\begin{aligned}
    T
    &=\int_0^1(u-\tfrac12)\phi''(u)\,\mathrm du\\
    &=\int_{1/2}^1
       (u-\tfrac12)[\phi''(u)-\phi''(1-u)]\,\mathrm du.
\end{aligned}
\]
Because
$\phi''(u)=\langle w,\nabla^2f(x+uw)w\rangle$,
the Hessian assumption implies
$|\phi''(u)-\phi''(1-u)|
\le M_\nu\|w\|^{2+\nu}|2u-1|^\nu$.
Therefore,
\begin{equation}
    |T|
    \le M_\nu\|w\|^{2+\nu}
       \int_{1/2}^1(u-\tfrac12)(2u-1)^\nu\,\mathrm du
    =\frac{M_\nu}{4(\nu+2)}\|w\|^{2+\nu}.
    \label{app:eq:holder_remainder}
\end{equation}
Combining~\eqref{eq:corrected_identity},
\eqref{app:eq:holder_remainder}, and
$2bI-\gamma\overline H\succeq\tau I$
proves~\eqref{eq:energy_descent}.

\subsection{Energy and state bounds}

Recall $D:=2A-h(1-t)^2$ and $N:=2\beta^2+hd/2$.
The coefficient identities
\[
    A=\frac{1+2\beta+5\beta^2}{2b},
    ~~
    D=\tau(1-t)^2+td,
    ~~
    A-2\beta(1-t)=\frac d2
\]
show that $A,D>0$. Also $N>0$, since its first term is positive
when $\beta>0$ and its second term is $h/2>0$ when $\beta=0$.
We have $0<t\le1/2$ and $A\le b<2$.

For any $x$, set $\delta f:=f(x)-f_\star$ and $g:=\nabla f(x)$.
Smoothness, evaluated at $x-g/L$, implies
$f(x-g/L)\le f(x)-\|g\|^2/(2L)$.
Since $f_\star\le f(x-g/L)$, it follows that
$\|g\|^2\le2L\delta f$.
Completing the square in the corrected energy gives
\[
    \mathcal H(x,v)
    =\delta f-\frac{\gamma(1-t)^2}{4A}\|g\|^2
     +\frac A{4\gamma}
       \left\|v-\frac{\gamma(1-t)}A g\right\|^2.
\]
Substitution of the gradient bound proves
\eqref{eq:energy_coercivity}.
In particular, $\mathcal H(x,v)\ge0$, and equality forces
$\delta f=0$ and $v=0$, hence $(x,v)=(x_\star,0)$.
Strong convexity further gives
\begin{equation}
    \mathcal H(x,v)
    \ge\frac{D\mu}{4A}\|x-x_\star\|^2.
    \label{app:eq:position_energy}
\end{equation}

To bound the next velocity, use $\delta f\ge\|g\|^2/(2L)$
once more:
\[
    \mathcal H(x,v)
    \ge\frac1{4\gamma}
    \begin{bmatrix}v\\\gamma g\end{bmatrix}^{\!\top}
    (Q\otimes I)
    \begin{bmatrix}v\\\gamma g\end{bmatrix},
    ~~
    Q:=\begin{pmatrix}
        A&-(1-t)\\
        -(1-t)&2/h
    \end{pmatrix}.
\]
The matrix $Q$ is positive definite because its leading diagonal
entry is positive and $\det Q=D/h>0$. Its inverse is
\[
    Q^{-1}
    =\frac1D
      \begin{pmatrix}
          2&h(1-t)\\
          h(1-t)&hA
      \end{pmatrix}.
\]
For $r:=(\beta,-1)$, the coefficient identity above gives
\[
    rQ^{-1}r^\top
    =\frac{2\beta^2+h[A-2\beta(1-t)]}{D}
    =\frac ND.
\]
Applying Cauchy--Schwarz in the $Q$-inner product to each
coordinate and summing proves
\[
    \|\beta v-\gamma g\|^2
    \le\frac ND
       \begin{bmatrix}v\\\gamma g\end{bmatrix}^{\!\top}
       (Q\otimes I)
       \begin{bmatrix}v\\\gamma g\end{bmatrix}
    \le\frac{4\gamma N}{D}\mathcal H(x,v),
\]
which is~\eqref{eq:next_velocity}.

\subsection{Invariance and rate}

We now prove Theorem~\ref{thm:invariant}.
Let $R_\nu$ and $\mathcal B_\nu$ be defined by
\eqref{eq:energy_budget}.
If $\mathcal H_k\le\mathcal B_\nu$, then
\eqref{eq:next_velocity} gives $\|w\|\le R_\nu$ and
\[
    \frac{M_\nu}{4(\nu+2)}\|w\|^\nu
    \le\frac{M_\nu R_\nu^\nu}{4(\nu+2)}
    =\frac{\tau t}{4\gamma}.
\]
The descent estimate~\eqref{eq:energy_descent} consequently implies
\eqref{eq:closed_descent} and
$\mathcal H_{k+1}\le\mathcal H_k\le\mathcal B_\nu$.
Induction from $\mathcal H_0\le\mathcal B_\nu$
proves the invariant energy budget and the displacement bound
for every $k\ge0$.

To quantify the decrease, define
$U:=A+1-t$, $V:=2/(\gamma\mu)+1-t$,
$P:=U+\beta^2V$, and $Q_0:=U+b^2V$.
Minimizing the strong-convexity lower model of $f$ at $x$ gives
$\delta f\le\|g\|^2/(2\mu)$.
Young's inequality then implies
\[
    \mathcal H(x,v)
    \le\frac1{4\gamma}
       \bigl(U\|v\|^2+V\|\gamma g\|^2\bigr).
\]
Set $s:=v+w$ along the iteration.
Since $v=s-w$ and $\gamma g=\beta s-bw$, we obtain
\begin{equation}
    \mathcal H_k
    \le\frac1{2\gamma}
       \bigl(P\|s\|^2+Q_0\|w\|^2\bigr).
    \label{app:eq:energy_rate_majorant}
\end{equation}
Define
\begin{equation}
    \widehat\lambda
    :=\min\left\{
        \frac d{4P},\frac{\tau t}{2Q_0}
    \right\}.
    \label{app:eq:lambda_sharp}
\end{equation}
The dissipation in~\eqref{eq:closed_descent} is at least
$\widehat\lambda$ times the right-hand side of
\eqref{app:eq:energy_rate_majorant}, hence at least
$\widehat\lambda\mathcal H_k$.
Moreover, $P>A\ge d/2$, so $0<\widehat\lambda<1/2$.
It follows that
$\mathcal H_{k+1}\le(1-\widehat\lambda)\mathcal H_k$.

To recover the simpler coefficient in the theorem, set
$K_\mu:=7+8/(\gamma\mu)$.
The bounds $A\le2$, $1-t\le1$, $\beta^2\le1$, and $b^2\le4$
give
\[
    P\le4+\frac2{\gamma\mu}\le\frac{K_\mu}{2},
    ~~
    Q_0\le7+\frac8{\gamma\mu}=K_\mu.
\]
The second inequality for $P$ uses
$\gamma\mu<\gamma L<2b<4$.
Since $\tau<4$ and $t=d/(2b)$,
\[
    \frac d{4P}\ge\frac d{2K_\mu}
       \ge\frac{d\tau}{8K_\mu},
    ~~
    \frac{\tau t}{2Q_0}
       =\frac{d\tau}{4bQ_0}
       \ge\frac{d\tau}{8K_\mu}.
\]
Thus $\widehat\lambda\ge\lambda$, where
$\lambda:=d\tau/(8K_\mu)\in(0,1)$, and
\[
    \mathcal H_k
    \le(1-\widehat\lambda)^k\mathcal H_0
    \le(1-\lambda)^k\mathcal H_0.
\]
Combining this inequality with~\eqref{app:eq:position_energy}
gives the stated position bound. Equation~\eqref{eq:next_velocity}
also yields
\[
    \|v_{k+1}\|^2
    \le\frac{4\gamma N}{D}(1-\lambda)^k\mathcal H_0.
\]
Both position error and velocity therefore converge to zero,
completing the proof of Theorem~\ref{thm:invariant}.
The numerical energy envelopes use
$\widehat\lambda$ from~\eqref{app:eq:lambda_sharp}.

\subsection{Attraction radius}

For $e:=x-x_\star$, smoothness gives
$f(x)-f_\star\le L\|e\|^2/2$ and
$\|\nabla f(x)\|\le L\|e\|$.
Since $A\le2$ and $1-t\le1$,
\[
\begin{aligned}
    \mathcal H(x,v)
    &\le\frac L2\|e\|^2+\frac A{4\gamma}\|v\|^2
         +\frac L2\|e\|\|v\|\\
    &\le\frac{3L}{4}\|e\|^2
       +\left(\frac1{2\gamma}+\frac L4\right)\|v\|^2\\
    &\le C_H\|(e,v)\|_2^2,
\end{aligned}
\]
where $C_H:=\max\{3L/4,\,1/(2\gamma)+L/4\}$.
The closed ball of radius
$R_\nu\sqrt{D/(4\gamma NC_H)}$ around $(x_\star,0)$
is therefore contained in $\{\mathcal H\le\mathcal B_\nu\}$.
Theorem~\ref{thm:invariant} applies to every initial state in this
ball and every objective in the class, proving
\eqref{eq:radius_lower_bound}.

For Corollary~\ref{cor:radius_lower}, fix
$\mu,L,\gamma,\nu$ with $\gamma L<4$ and set
$\Delta:=4-\gamma L>0$.
As $d\downarrow0$, we have
$t/d\to1/4$, $\tau\to\Delta$, $D\to\Delta$, and $N\to2$.
The lower bound divided by $(d/M_\nu)^{1/\nu}$ consequently tends to
\[
    \left(\frac{(\nu+2)\Delta}{4\gamma}\right)^{1/\nu}
    \sqrt{\frac{\Delta}{8\gamma C_H}}>0.
\]
Take $c$ to be half of this limit and choose $d_0>0$ sufficiently
small that the ratio is at least $c$ and $\tau>0$ whenever
$0<d<d_0$.
These choices depend only on $\mu,L,\gamma,\nu$ and establish the
claim simultaneously for every $n\ge1$ and $M_\nu>0$.
The same lower bound applies after restriction to $C^\infty$
objectives.

\subsection{Spatial scaling}

Fix $n,\mu,L,\nu,\gamma,\beta$ and write
$\mathcal F_M:=\mathcal F_{\mu,L,M}^{n,\nu}$ for $M>0$.
Let $R:=M^{-1/\nu}$. For $f\in\mathcal F_1$, define
$f_R(x):=R^2f(x/R)$.
Then
$\nabla f_R(x)=R\nabla f(x/R)$ and
$\nabla^2f_R(x)=\nabla^2f(x/R)$.
The curvature bounds are preserved, and
\[
    \|\nabla^2f_R(x)-\nabla^2f_R(y)\|_{\mathrm{op}}
    \le R^{-\nu}\|x-y\|^\nu=M\|x-y\|^\nu.
\]
Conversely, if $F\in\mathcal F_M$, the function
$f(y):=R^{-2}F(Ry)$ has the same curvature bounds and Hessian
H\"older constant at most $MR^\nu=1$.
These transformations are inverses, so they give a bijection
between $\mathcal F_1$ and $\mathcal F_M$, including their
$C^\infty$ subclasses.

If $(x_k,v_k)$ is a heavy-ball trajectory for $f$, then
$(Rx_k,Rv_k)$ is the trajectory for $f_R$ from $(Rx_0,Rv_0)$
with the same parameters. Indeed,
\[
    \beta Rv_k-\gamma\nabla f_R(Rx_k)
    =R\bigl(\beta v_k-\gamma\nabla f(x_k)\bigr),
\]
and the position update scales in the same way.
The minimizer becomes $Rx_\star$ and every state distance from
equilibrium is multiplied by $R$.
Convergence of either trajectory is equivalent to convergence
of the other.

Define the set of admissible radii by
$\mathscr R(M):=\{r\ge0:\eqref{eq:uniform_guarantee}
\text{ holds for }\mathcal F_M\}$.
The class bijection and trajectory scaling show exactly that
$\mathscr R(M)=R\mathscr R(1)$.
Taking suprema proves~\eqref{eq:radius_scaling}, also when the
radius is infinite.

\section{Startup and Local Rates}
\label{app:asymptotic}

We prove the finite warm-up guarantee in Corollary~\ref{cor:startup}
and the local rate bound~\eqref{eq:startup_rate}. The target
heavy-ball parameters remain fixed during warm-up; they determine
the energy budget used by the stopping test.

\subsection{Finite warm-up}

\begin{proposition}[Warm-up length]
\label{app:prop:warmup}
Let $f\in\mathcal F_{\mu,L,M_\nu}^{n,\nu}$, and fix target parameters
$\gamma>0$ and $0\le\beta<1$ satisfying $\gamma L<2(1+\beta)$.
Let $\mathcal B_\nu$ be the positive budget in
\eqref{eq:energy_budget}. Starting from any $y_0\in\mathbb R^n$,
run $y_{j+1}=y_j-L^{-1}\nabla f(y_j)$ and define
\[
    J:=\min\bigl\{j\ge0:
        \|\nabla f(y_j)\|^2\le2\mu\mathcal B_\nu\bigr\}.
\]
Then $J<\infty$. If the initial state satisfies the test, $J=0$.
Otherwise, with $F_0:=f(y_0)-f_\star$,
$\rho_G:=1-\mu/L$, and $\lceil u\rceil_+:=\max\{0,\lceil u\rceil\}$,
\begin{equation}
    \begin{aligned}
        J
        \le
        \left\lceil
        \frac{\log\!\bigl(LF_0/(\mu\mathcal B_\nu)\bigr)}
             {-\log\rho_G}
        \right\rceil_+
        \le
        \left\lceil
        \frac{\log\!\bigl(
            L\|\nabla f(y_0)\|^2/(2\mu^2\mathcal B_\nu)
        \bigr)}
             {-\log\rho_G}
        \right\rceil_+.
    \end{aligned}
    \label{app:eq:warmup_length}
\end{equation}
Heavy-ball initialized at $(y_J,0)$ satisfies all conclusions of
Theorem~\ref{thm:invariant}.
\end{proposition}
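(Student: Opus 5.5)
The argument combines the standard linear rate of gradient descent in function value with the zero-velocity evaluation of the corrected energy.

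\emph{Step 1: gradient descent contracts the function gap.} For step $1/L$, the descent lemma gives
\[
f(y_{j+1})\le f(y_j)-\|\nabla f(y_j)\|^2/(2L).
\]
Strong convexity gives the Polyak--{\L}ojasiewicz bound $\|\nabla f(y)\|^2\ge 2\mu\,(f(y)-f_\star)$. Together these yield $F_j:=f(y_j)-f_\star\le\rho_G^jF_0$ with $\rho_G=1-\mu/L\in(0,1)$.

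\emph{Step 2: translate to the gradient test.} By smoothness, $\|\nabla f(y_j)\|^2\le 2L F_j\le 2L\rho_G^jF_0$. The test $\|\nabla f(y_j)\|^2\le 2\mu\mathcal B_\nu$ therefore holds as soon as $\rho_G^j\le \mu\mathcal B_\nu/(LF_0)$, which is equivalent to
\[
j\ge \log\bigl(LF_0/(\mu\mathcal B_\nu)\bigr)/(-\log\rho_G).
\]
Since $J$ is the first index passing the test, it is at most the smallest nonnegative integer satisfying this. That integer is the first bound in \eqref{app:eq:warmup_length}, with the positive part covering the case where the logarithm is nonpositive. This also shows $J<\infty$, because $\mathcal B_\nu>0$ under $\tau>0$ and $\rho_G<1$.

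\emph{Step 3: the second bound.} Use $F_0\le\|\nabla f(y_0)\|^2/(2\mu)$, obtained by minimizing the strong-convexity lower model. Because $\log$ and $\lceil\cdot\rceil_+$ are monotone, substituting this gives the second inequality. The statement that $J=0$ when $y_0$ already satisfies the test is immediate from the definition of $J$.

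\emph{Step 4: apply Theorem~\ref{thm:invariant}.} At zero velocity, $\mathcal H(y_J,0)=f(y_J)-f_\star\le\|\nabla f(y_J)\|^2/(2\mu)\le\mathcal B_\nu$. Theorem~\ref{thm:invariant} therefore applies to heavy-ball started at $(y_J,0)$, exactly as in Corollary~\ref{cor:startup}.

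The main point needing care is that $F_0>0$ in the nontrivial case, so the logarithm is defined. If $y_0$ fails the test, then $\nabla f(y_0)\ne0$ and hence $F_0>0$; the positive part then handles any remaining sign issue. Everything else is routine.
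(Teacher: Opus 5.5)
Your proposal is correct and follows the paper's proof essentially step for step. The descent lemma and the Polyak--{\L}ojasiewicz bound give $F_j\le\rho_G^jF_0$, and smoothness converts this into the gradient test. The inequality $F_0\le\|\nabla f(y_0)\|^2/(2\mu)$ yields the second bound, and the zero-velocity evaluation $\mathcal H(y_J,0)\le\mathcal B_\nu$ then invokes Theorem~\ref{thm:invariant}.
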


\begin{proof}
Write $F_j:=f(y_j)-f_\star$. Strong convexity and smoothness give
\[
    2\mu F_j\le\|\nabla f(y_j)\|^2\le2LF_j.
\]
For the first inequality, minimize the strong-convexity lower bound
\[
    f(z)\ge f(y_j)+\langle\nabla f(y_j),z-y_j\rangle
        +\frac\mu2\|z-y_j\|^2
\]
over $z$. For the second, apply the descent lemma at
$y_j-L^{-1}\nabla f(y_j)$ and bound the resulting function value
below by $f_\star$.
The same descent lemma yields
\[
    F_{j+1}
    \le F_j-\frac{1}{2L}\|\nabla f(y_j)\|^2
    \le\rho_G F_j.
\]
It follows that
$\|\nabla f(y_j)\|^2\le2L\rho_G^jF_0$.
If $\nabla f(y_0)=0$, then $y_0=x_\star$ and $J=0$.
More generally, any initial state satisfying the gradient test has
$J=0$. Otherwise $F_0>0$, and the first bound in
\eqref{app:eq:warmup_length} follows by solving
$L\rho_G^jF_0\le\mu\mathcal B_\nu$ for an integer $j\ge0$.
The inequality $F_0\le\|\nabla f(y_0)\|^2/(2\mu)$ gives the
second bound, which uses only the initial gradient and known constants.

At the stopping point, the velocity reset gives
\[
    \mathcal H(y_J,0)
    =f(y_J)-f_\star
    \le\frac{\|\nabla f(y_J)\|^2}{2\mu}
    \le\mathcal B_\nu.
\]
Theorem~\ref{thm:invariant} now applies to the heavy-ball phase.
\end{proof}

\subsection{Local root rate}

\begin{proposition}[Root rate]
\label{app:prop:rootrate}
Suppose $f$ is $C^2$ near $x_\star$,
$\nabla f(x_\star)=0$, and
$\mu I\preceq\nabla^2f(x_\star)\preceq LI$ with $0<\mu<L$.
Let $\gamma>0$ and $0\le\beta<1$ satisfy
$\gamma L<2(1+\beta)$. Define
\[
    Q_\star:=\nabla^2f(x_\star),~~
    J_\star:=
    \begin{pmatrix}
        I-\gamma Q_\star&\beta I\\
        -\gamma Q_\star&\beta I
    \end{pmatrix}.
\]
Every heavy-ball trajectory converging to $(x_\star,0)$ satisfies
\[
    \limsup_{k\to\infty}
    \|(x_k-x_\star,v_k)\|_2^{1/k}
    \le\rho(J_\star)<1.
\]
For the quadratic-optimal parameters
$\gamma_\star:=4/(\sqrt L+\sqrt\mu)^2$ and
$\beta_\star:=q^2$, where
$q:=(\sqrt L-\sqrt\mu)/(\sqrt L+\sqrt\mu)$,
we have $\rho(J_\star)=q$.
\end{proposition}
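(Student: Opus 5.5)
The plan has three parts: write the heavy-ball map in error coordinates $e_k:=x_k-x_\star$, linearize at equilibrium, and bound the root rate of the converging trajectory by $\rho(J_\star)$ via a perturbation argument in an adapted norm. Writing $\nabla f(x_k)=Q_\star e_k+r(e_k)$ with $r(e)=\int_0^1[\nabla^2 f(x_\star+ue)-Q_\star]\,du\;e$, we have $\|r(e)\|\le\omega(\|e\|)\|e\|$ with $\omega(\rho)\to0$ as $\rho\to0$, by continuity of the Hessian near $x_\star$. The state $z_k:=(e_k,v_k)$ then obeys $z_{k+1}=J_\star z_k+\varepsilon_k$ with $\|\varepsilon_k\|\le\sqrt2\,\gamma\,\omega(\|e_k\|)\|z_k\|$, since $r$ enters both the velocity and the position rows.

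First, establish $\rho(J_\star)<1$. Diagonalize $Q_\star=U\Lambda U^\top$; $J_\star$ is then orthogonally similar to a direct sum of $2\times2$ blocks with eigenvalue $\lambda_i\in[\mu,L]$, each with characteristic polynomial $\zeta^2-(1+\beta-\gamma\lambda_i)\zeta+\beta$. Exactly as in the cycle-Jacobian computation of Section~\ref{sec:cycles}, $p(1)=\gamma\lambda_i>0$, $p(-1)=2(1+\beta)-\gamma\lambda_i\ge\tau>0$, and $p(0)=\beta\in[0,1)$. The Jury/Schur conditions therefore place both roots strictly inside the unit disk.

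Next, fix $\epsilon>0$ and pick a norm $\|\cdot\|_\epsilon$ on $\mathbb R^{2n}$ whose induced operator norm satisfies $\|J_\star\|_\epsilon\le\rho(J_\star)+\epsilon$. This is the standard construction and handles nondiagonalizable Jordan blocks, which is the repeated-root case. By equivalence of norms, $\|\varepsilon_k\|_\epsilon\le C_\epsilon\,\omega(\|e_k\|)\|z_k\|_\epsilon$. Since the trajectory converges, $\omega(\|e_k\|)\to0$, so for $k\ge k_0$ we get $\|z_{k+1}\|_\epsilon\le(\rho(J_\star)+2\epsilon)\|z_k\|_\epsilon$. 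Iterating and taking $k$-th roots gives $\limsup_k\|z_k\|_2^{1/k}\le\rho(J_\star)+2\epsilon$, and letting $\epsilon\downarrow0$ gives the bound. If $z_k=0$ at some step the claim is trivial.

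Finally, compute $\rho(J_\star)=q$ at $(\gamma_\star,\beta_\star)$. Each block polynomial has discriminant $(1+\beta_\star-\gamma_\star\lambda)^2-4\beta_\star$, and one checks that $|1+q^2-\gamma_\star\lambda|\le2q$ for $\lambda\in[\mu,L]$, with equality at the endpoints: $1+q^2-\gamma_\star\mu=2q$ and $1+q^2-\gamma_\star L=-2q$. So every block has complex-conjugate or repeated roots of modulus $\sqrt{\beta_\star}=q$. Hence $\rho(J_\star)=q$, and we also confirm that these parameters satisfy $\gamma_\star L<2(1+\beta_\star)$, which is equivalent to $q<1$. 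The main obstacle is the perturbation step: the repeated roots at $\lambda=\mu$ and $\lambda=L$ produce Jordan blocks, so $\|J_\star^k\|$ grows like $kq^k$ and we cannot expect $\|J_\star\|=q$ in any norm. The $\epsilon$-adapted norm argument absorbs this, giving only the root-rate statement rather than a clean geometric bound.
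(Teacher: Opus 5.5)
Your proof is correct and follows essentially the same route as the paper. Both block-diagonalize $J_\star$ in the eigenbasis of $Q_\star$, place the roots inside the unit disk using $p(1)>0$, $p(-1)>0$ and $0\le\beta<1$, absorb the nonlinear remainder along the convergent tail with a norm adapted to $J_\star$, and verify $1+\beta_\star-\gamma_\star\mu=2q$ and $1+\beta_\star-\gamma_\star L=-2q$. The only differences are cosmetic: the paper builds the adapted norm explicitly as $\|z\|_a:=\sup_{j\ge0}a^{-j}\|J_\star^jz\|_2$ and locates the roots by an elementary argument rather than citing the Jury criterion.
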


\begin{proof}
For $z:=(e,v)$ with $e:=x-x_\star$, define the state map
\[
    \Phi(e,v):=
    \bigl(e+\beta v-\gamma\nabla f(x_\star+e),\,
          \beta v-\gamma\nabla f(x_\star+e)\bigr).
\]
It is continuously differentiable near zero, with
$\Phi(0)=0$ and $D\Phi(0)=J_\star$.
Orthogonally diagonalizing the symmetric matrix $Q_\star$ reduces
$J_\star$ to the blocks
\[
    J_\ell:=
    \begin{pmatrix}
        1-\gamma\ell&\beta\\
        -\gamma\ell&\beta
    \end{pmatrix},
    ~~ \ell\in\operatorname{spec}(Q_\star).
\]
Writing $b:=1+\beta$, their characteristic polynomials are
$p_\ell(r):=r^2-(b-\gamma\ell)r+\beta$.
Here $p_\ell(1)=\gamma\ell>0$ and
$p_\ell(-1)=2b-\gamma\ell>0$, while the product of the roots
is $\beta\in[0,1)$. If the roots are nonreal, their modulus is
$\sqrt\beta<1$. If they are real, a root at least one would force
the other below one and thus $p_\ell(1)\le0$; a root at most minus
one would similarly give $p_\ell(-1)\le0$.
Thus $\rho:=\rho(J_\star)<1$.

Fix $\rho<a<r<1$. The Jordan normal form implies
$\sup_{j\ge0}a^{-j}\|J_\star^j\|_2<\infty$: each block
contributes a polynomial in $j$ times a geometric sequence whose
ratio to $a^j$ decays. Consequently,
\[
    \|z\|_a:=\sup_{j\ge0}a^{-j}\|J_\star^jz\|_2
\]
defines a norm equivalent to $\|\cdot\|_2$, and
$\|J_\star z\|_a\le a\|z\|_a$.
Differentiability at zero gives
$\Phi(z)=J_\star z+o(\|z\|_a)$.
There is therefore a radius $\delta>0$ such that
\[
    \|\Phi(z)\|_a\le r\|z\|_a
    ~~\text{whenever }\|z\|_a\le\delta.
\]
The assumed convergence places some iterate $z_K$ in this ball.
The ball is invariant, and
$\|z_k\|_a\le r^{k-K}\|z_K\|_a$ for all $k\ge K$.
Norm equivalence then gives
$\limsup_k\|z_k\|_2^{1/k}\le r$.
Letting $r\downarrow\rho$ proves the root-rate bound.

At the quadratic-optimal parameters,
$b-\gamma_\star\mu=2q$ and
$b-\gamma_\star L=-2q$.
Each block therefore has polynomial
$r^2-c_\ell r+q^2$ with
$c_\ell:=b-\gamma_\star\ell\in[-2q,2q]$.
For $|c_\ell|<2q$, both roots have modulus $q$.
At the endpoints, the repeated root is $q$ or $-q$.
Thus $\rho(J_\star)=q$ in every case. The norm argument above
includes repeated roots and their possible polynomial transients.
\end{proof}

The quadratic-optimal parameters satisfy
$2(1+\beta_\star)-\gamma_\star L
=4\mu/(\sqrt L+\sqrt\mu)^2>0$.
Proposition~\ref{app:prop:warmup} and
Theorem~\ref{thm:invariant} ensure convergence after the startup test,
so Proposition~\ref{app:prop:rootrate} proves
\eqref{eq:startup_rate}. If the heavy-ball phase begins after $J$
gradient steps, its decay bound holds with index $k-J$.
Since $(k-J)/k\to1$, the finite warm-up leaves the asymptotic root
factor unchanged.

\section{Smooth Cycles}
\label{app:holder}

We prove Theorem~\ref{thm:sharp_radius}. Throughout this appendix,
$0<\mu<L$, $\gamma>0$ with $\gamma L<4$, and $0<\nu\le1$ are fixed.
Recall $d:=1-\beta$ and $b:=2-d$, and set $\Delta:=4-\gamma L>0$.
We first construct a scalar objective with a locally attracting cycle
at distance $O((d/M_\nu)^{1/\nu})$ from equilibrium.
We then combine this construction with the uniform energy bound.

\subsection{Nodes and interpolation}

The map $\theta\mapsto2(1-\cos\theta)$ takes $(0,\pi)$ continuously
and strictly increasingly onto $(0,4)$. The density of rational
multiples of $2\pi$ therefore gives coprime positive integers $m,K$
such that $\theta:=2\pi m/K\in(0,\pi)$ satisfies
\eqref{eq:resonance}. In particular, $K\ge3$.
Fix this choice and define
$a:=1-\cos\theta$, $s:=\sin\theta>0$, $\phi:=\pi/(2K)$,
$u_j:=j\theta-\phi$, and $z_j:=\cos u_j$ for $0\le j<K$.
Indices on this sequence are interpreted modulo $K$.

The nodes $z_j$ are pairwise distinct. Indeed, $\cos u_i=\cos u_j$
implies either $u_i-u_j\in2\pi\mathbb Z$ or
$u_i+u_j\in2\pi\mathbb Z$. The first alternative gives
$m(i-j)\in K\mathbb Z$, and coprimality yields $i=j$.
The second would give $2m(i+j)-1\in2K\mathbb Z$, which is
impossible by parity. The prescribed values $\sin u_j$ are not all
equal, since one sine value corresponds to at most two distinct
cosine values on the unit circle.

Define the smooth cutoff
\[
    \eta(q):=
    \begin{cases}
        e^{-1/q},&q>0,\\
        0,&q\le0,
    \end{cases}
    ~~
    \sigma(q):=
    \frac{\eta(q-1/3)}{\eta(q-1/3)+\eta(2/3-q)}.
\]
The denominator is positive everywhere. Thus $\sigma$ is smooth,
takes values in $[0,1]$, equals zero for $q\le1/3$, and equals one
for $q\ge2/3$.

Let $\zeta_1<\cdots<\zeta_K$ be the sorted nodes, and let $y_i$
denote the prescribed sine value at $\zeta_i$. On each interval
$[\zeta_i,\zeta_{i+1}]$, define
\[
    H(z):=y_i+(y_{i+1}-y_i)
    \sigma\!\left(\frac{z-\zeta_i}{\zeta_{i+1}-\zeta_i}\right).
\]
Extend $H$ by $y_1$ to the left and by $y_K$ to the right.
The pieces agree on constant neighborhoods of the nodes, so
$H\in C^\infty(\mathbb R)$, $H(z_j)=\sin u_j$, and
$\|H\|_\infty\le1$. Moreover, $H'$ has compact support.

Set $K_1:=\|H'\|_\infty$ and $K_2:=\|H''\|_\infty$.
Both are finite and positive: $H$ is nonconstant, and a nonzero
compactly supported derivative cannot be constant. For any $x,y$,
\[
    |H'(x)-H'(y)|
    \le\min\{K_2|x-y|,2K_1\}
    \le K_2^\nu(2K_1)^{1-\nu}|x-y|^\nu.
\]
The last inequality follows from
$\min\{p,q\}\le p^\nu q^{1-\nu}$ for $p,q>0$.
Consequently, $K_{H,\nu}:=[H']_\nu$ is finite.
It is also positive, since a zero seminorm would force $H'$ to be
constant and hence zero, contradicting the interpolation data.
The angle and profile depend only on $\mu,L,\gamma$, while
$K_{H,\nu}$ may also depend on $\nu$.

\subsection{The scalar objective}

Define
\[
    \delta:=\min\{2a-\gamma\mu,\gamma L-2a\}>0,
    ~~
    d_0:=\min\left\{
        \frac12,\frac{\Delta}{4},
        \frac{\delta}{2(a+sK_1)}
    \right\}.
\]
For $0<d<d_0$ and $M_\nu>0$, use $R_c$ from
\eqref{eq:cycle_construction} and define the objective explicitly by
\[
    f(x):=\frac{ba}{2\gamma}x^2
    +\frac{dR_c^2s}{\gamma}\int_0^{x/R_c}H(q)\,dq.
\]
Its derivative is exactly the prescribed expression in
\eqref{eq:cycle_construction}. The function is $C^\infty$, and
\[
    \gamma f''(x)=(2-d)a+dsH'(x/R_c),
    ~~
    |\gamma f''(x)-2a|
    \le d(a+sK_1)<\delta/2.
\]
Thus $\mu<f''(x)<L$ globally. Changing variables in the supremum
that defines the H\"older seminorm gives
\[
    [f'']_\nu
    =\frac{ds}{\gamma R_c^\nu}[H']_\nu
    =M_\nu.
\]
Hence $f\in\mathcal F_{\mu,L,M_\nu}^{1,\nu}\cap C^\infty$.
The choice of $d_0$ also ensures
$\tau=\Delta-2d>\Delta/2>0$.

Set $x_j:=R_c\cos u_j$ and $v_j:=x_j-x_{j-1}$.
The trigonometric addition formulas give
\[
    \begin{aligned}
        bx_j-x_{j+1}-\beta x_{j-1}
        &=R_c\bigl(ba\cos u_j+ds\sin u_j\bigr)\\
        &=\gamma f'(x_j).
    \end{aligned}
\]
It follows that $\beta v_j-\gamma f'(x_j)=v_{j+1}$ and
$x_j+v_{j+1}=x_{j+1}$. The resulting heavy-ball trajectory is
periodic, with least period $K$ because its position nodes are
distinct.

\subsection{Attraction and radius}

Let $\Phi(x,v):=(x+\beta v-\gamma f'(x),\,
\beta v-\gamma f'(x))$ denote the scalar heavy-ball map.
Since $H$ is constant near every $z_j$, the curvature at each
cycle position is $ba/\gamma$. The derivative of $\Phi$ at every
cycle phase is therefore
\[
    J:=\begin{pmatrix}
        1-ba&\beta\\
        -ba&\beta
    \end{pmatrix},
    ~~
    p(\zeta):=\det(\zeta I-J)
    =\zeta^2-b\cos\theta\,\zeta+\beta.
\]
We have $p(1)=b(1-\cos\theta)>0$,
$p(-1)=b(1+\cos\theta)>0$, and $0\le\beta<1$.
These conditions imply that both roots lie strictly inside the
unit disk. If the roots are nonreal, their common modulus is
$\sqrt\beta<1$. If they are real, a root at or beyond $1$ would,
by $p(1)>0$, require both roots to exceed $1$, contradicting their
product $\beta<1$. The same argument at $-1$ excludes a root at
or below $-1$.

For completeness, this linear stability gives local attraction as
follows. Every cycle phase $z$ is a fixed point of $\Phi^K$, and
$D(\Phi^K)(z)=J^K$. Put $B:=J^K$. Since $\rho(B)<1$, the series
$P:=\sum_{\ell=0}^{\infty}(B^\ell)^\top B^\ell$ converges to a
positive-definite matrix; convergence follows, for example, by
bounding powers of each Jordan block. It satisfies
$B^\top PB=P-I$. In the norm
$\|w\|_P:=(w^\top Pw)^{1/2}$, the induced norm of $B$ is strictly
less than one. By continuity of $D(\Phi^K)$, its induced norm is
bounded by some $q<1$ throughout a sufficiently small convex
neighborhood of $z$. The mean-value inequality then gives
$\|\Phi^K(y)-z\|_P\le q\|y-z\|_P$ on a small ball centered at
$z$. This ball is invariant under $\Phi^K$, and its iterates converge
to $z$. Continuity of $\Phi,\ldots,\Phi^{K-1}$ gives convergence of
the intermediate iterates to the corresponding cycle phases.
Thus the cycle attracts an open neighborhood in phase space.

Strong convexity implies
$\mu|x_\star|\le|f'(0)|\le dR_cs/\gamma$.
Since $|x_j|\le R_c$ and $|v_j|\le2R_c$, every cycle phase obeys
\[
    \begin{aligned}
        \|(x_j-x_\star,v_j)\|_2
        &\le R_c\sqrt{\left(1+\frac{ds}{\gamma\mu}\right)^2+4}\\
        &\le C\left(\frac{d}{M_\nu}\right)^{1/\nu},
    \end{aligned}
\]
where the constant
\[
    C:=\left(\frac{sK_{H,\nu}}{\gamma}\right)^{1/\nu}
    \sqrt{\left(1+\frac{d_0s}{\gamma\mu}\right)^2+4}
\]
is independent of $M_\nu$ and the dimension.
Every such state is nonconvergent. If its distance from equilibrium
is $R$, the uniform guarantee fails for every initialization radius
at least $R$. Taking the supremum in~\eqref{eq:uniform_radius}
therefore gives $r_{\mathrm{univ}}\le R$ and proves the scalar
upper bound.

\subsection{Dimension and matching bounds}

For $n>1$, define
$F(x):=f(x_1)+(\mu/2)\sum_{\ell=2}^n x_\ell^2$.
Its Hessian is diagonal with entries $f''(x_1),\mu,\ldots,\mu$,
so the curvature bounds are unchanged. Moreover,
\[
    \begin{aligned}
        \|\nabla^2F(x)-\nabla^2F(y)\|_{\mathrm{op}}
        &=|f''(x_1)-f''(y_1)|\\
        &\le M_\nu|x_1-y_1|^\nu
        \le M_\nu\|x-y\|^\nu.
    \end{aligned}
\]
Zero initial positions and velocities in the additional coordinates
preserve the scalar cycle and its distance from equilibrium.
Each added coordinate has characteristic polynomial
$p_\mu(\zeta):=\zeta^2-(b-\gamma\mu)\zeta+\beta$.
Here $p_\mu(1)=\gamma\mu>0$ and
$p_\mu(-1)=2b-\gamma\mu>0$, the latter following from $\tau>0$.
The preceding root argument gives strict stability. After permuting
coordinates, the derivative of the period map is block diagonal
with stable blocks. The same positive-definite-matrix argument
therefore proves local attraction in $\mathbb R^{2n}$.

We can use the same damping threshold $d_0$ for the lower bound.
For $d<d_0\le1/2$, we have
$t=d/(2b)\ge d/4$ and $t\le1/6$.
Also $\tau\ge\Delta/2$,
$D=\tau(1-t)^2+td\ge\Delta/4$, and
$N=2\beta^2+\gamma Ld/2\le3$.
Substitution into~\eqref{eq:radius_lower_bound} gives
\[
    r_{\mathrm{univ}}
    \ge c\left(\frac{d}{M_\nu}\right)^{1/\nu},
    ~~
    c:=\left(\frac{(\nu+2)\Delta}{8\gamma}\right)^{1/\nu}
    \sqrt{\frac{\Delta}{48\gamma C_H}}.
\]
Thus $c,C,d_0$ depend only on the fixed parameters.

Let $r_{\mathrm{univ}}^\infty$ denote the uniform radius restricted
to $C^\infty$ objectives. The lower guarantee holds for the full
class, while the upper-bound witness is smooth. Consequently,
\[
    c\left(\frac{d}{M_\nu}\right)^{1/\nu}
    \le r_{\mathrm{univ}}
    \le r_{\mathrm{univ}}^\infty
    \le C\left(\frac{d}{M_\nu}\right)^{1/\nu}.
\]
Together with local attraction and the dimension-independent
construction, this proves Theorem~\ref{thm:sharp_radius}.

\section{Three-Cycle Constants}
\label{app:sharp}

We prove Corollary~\ref{cor:explicit_radius} for
$\mu=\gamma=1$, $L=7/2$, and $0<d\le1/10$.
The construction uses one smooth interpolation profile independent
of $d$ and $M$.

\subsection{A smooth flat profile}

Define $S(u):=6u^5-15u^4+10u^3$ for $u\in[0,1]$.
Its derivative is $p(u):=S'(u)=30u^2(1-u)^2$, and
\[
    \|p\|_\infty=\frac{15}{8},
    ~~
    p'(u)=60u(1-u)(1-2u),
    ~~
    \|p'\|_\infty=\frac{10}{\sqrt3}.
\]
The maxima occur at $u=1/2$ for $p$ and at
$u=(3\pm\sqrt3)/6$ for $|p'|$.
Both $p$ and $p'$ vanish at the endpoints.

Using $\eta$ from Appendix~\ref{app:holder}, define
$\chi(q):=\eta(q-1)/(\eta(q-1)+\eta(2-q))$ and
$C_\chi:=\|\chi'\|_\infty$.
The function $\chi$ is smooth, takes values in $[0,1]$, equals zero
for $q\le1$, and equals one for $q\ge2$. Its derivative is
compactly supported, so $C_\chi<\infty$.
For $0<\epsilon<1/8$, define on $[0,1]$
\[
    \begin{aligned}
        \psi_\epsilon(u)
        :=\chi(u/\epsilon)\chi((1-u)/\epsilon)p(u),~~Z_\epsilon:=\int_0^1\psi_\epsilon(u)\,du,~~
        S_\epsilon(u):=Z_\epsilon^{-1}
        \int_0^u\psi_\epsilon(q)\,dq.
    \end{aligned}
\]
Here $Z_\epsilon>0$, because $\psi_\epsilon=p>0$ on
$(2\epsilon,1-2\epsilon)$.
Extend $S_\epsilon$ by zero for $u<0$ and by one for $u>1$.
The function $\psi_\epsilon$ vanishes near both endpoints, so
$S_\epsilon$ is $C^\infty$ and constant on their neighborhoods.

We now bound its first two derivatives explicitly.
The difference $\psi_\epsilon-p$ is supported in
$[0,2\epsilon]\cup[1-2\epsilon,1]$.
On these strips, $|p|\le120\epsilon^2$ and
$|p'|\le120\epsilon$. Only one cutoff can vary on each strip;
the other equals one. Therefore,
\[
    \begin{aligned}
        0\le1-Z_\epsilon\le480\epsilon^3,~~
        \|\psi_\epsilon\|_\infty\le15/8,~~
        \|\psi_\epsilon'-p'\|_\infty
        &\le120(1+C_\chi)\epsilon.
    \end{aligned}
\]
Choose and fix $\epsilon>0$ sufficiently small that
\[
    480\epsilon^3\le\frac1{16},
    ~~
    120(1+C_\chi)\epsilon+2880\epsilon^3
    \le6-\frac{10}{\sqrt3}.
\]
Such a choice exists because the right-hand side of the second
inequality is positive. The preceding estimates yield
\[
    \|S_\epsilon'\|_\infty
    \le\frac{15/8}{1-480\epsilon^3}\le2,
    ~~
    \|S_\epsilon''\|_\infty
    \le\frac{10/\sqrt3+120(1+C_\chi)\epsilon}
             {1-480\epsilon^3}\le6.
\]
Finally, define $H(u):=-1+(3/2)S_\epsilon(|u|)$.
It is smooth at zero because it is constant nearby, and smooth
elsewhere by construction. It is constant near $-1,0,1$ and
satisfies
\[
    H(0)=-1,
    ~~ H(-1)=H(1)=\frac12,
    ~~ \|H'\|_\infty\le3,
    ~~ \|H''\|_\infty\le9.
\]
This proves the profile properties used in
Section~\ref{sec:explicit_cycle}.

\subsection{The periodic obstruction}

Set $R_c:=9d/M$ and define
\[
    f(x):=\frac12\left(3-\frac32d\right)x^2
    +dR_c^2\int_0^{x/R_c}H(q)\,dq.
\]
Then $f\in C^\infty$ and
\[
    \begin{aligned}
        f'(x)=\left(3-\frac32d\right)x+dR_cH(x/R_c),~
        f''(x)=3-\frac32d+dH'(x/R_c),~
        f'''(x)=\frac{d}{R_c}H''(x/R_c).
    \end{aligned}
\]
Hence
$3-9d/2\le f''(x)\le3+3d/2$ and
$\|f'''\|_\infty\le M$.
For $d\le1/10$, these curvature bounds lie within $[1,7/2]$,
and the mean-value theorem gives $\operatorname{Lip}(f'')\le M$.
Thus $f\in\mathcal F_{1,7/2,M}^{1,1}$.

At the three interpolation nodes,
\[
    f'(-R_c)=(-3+2d)R_c,
    ~~ f'(0)=-dR_c,
    ~~ f'(R_c)=(3-d)R_c.
\]
With $\beta=1-d$ and $\gamma=1$, the corresponding phase states
follow the exact cycle
\[
    (-R_c,-2R_c)\longmapsto(0,R_c)
    \longmapsto(R_c,R_c)\longmapsto(-R_c,-2R_c).
\]
The Hessian at each node is $3-3d/2=3b/2$.
The common phase derivative has characteristic polynomial
$p_3(\zeta):=\zeta^2+(b/2)\zeta+\beta$.
It satisfies $p_3(1)=3b/2>0$, $p_3(-1)=b/2>0$, and
$0\le\beta<1$. The root and period-map arguments in
Appendix~\ref{app:holder} therefore prove local attraction.

Since $f'(0)<0<f'(R_c)$, the minimizer satisfies
$0<x_\star<R_c$. Integrating $f''\ge1$ from $0$ to $x_\star$
gives $x_\star\le dR_c$. The phase $(0,R_c)$ is consequently at
distance
\[
    \|(-x_\star,R_c)\|_2
    \le R_c\sqrt{1+d^2}
    \le9\sqrt{1.01}\,\frac{d}{M}
    <10\frac{d}{M}
\]
from equilibrium. This proves $r_{\mathrm{univ}}\le10d/M$.
For $n>1$, adding $(1/2)\sum_{\ell=2}^n x_\ell^2$ preserves the
function-class bounds, the cycle distance, and local attraction,
as proved in Appendix~\ref{app:holder}. The upper bound therefore
holds in every dimension and on the $C^\infty$ subclass.

\subsection{The uniform guarantee}

For the parameters of this corollary,
$b\in[19/10,2)$, $\tau=1/2-2d\ge3/10$, and
$t=d/(2b)\le1/38$. Thus
\[
    D=\tau(1-t)^2+td
    \ge\frac3{10}\left(\frac{37}{38}\right)^2
    >\frac7{25}.
\]
Furthermore,
$N=2(1-d)^2+(7/4)d=2-(9/4)d+2d^2\le2$
and $C_H=21/8$.
For $\nu=1$, the step budget in~\eqref{eq:energy_budget} satisfies
\[
    R_1=\frac{3\tau t}{M}
    =\frac{3\tau d}{2bM}
    \ge\frac{9d}{40M}.
\]
It follows from~\eqref{eq:radius_lower_bound} that
\[
    \begin{aligned}
        r_{\mathrm{univ}}
        \ge R_1\sqrt{\frac{D}{4NC_H}}
        \ge R_1\sqrt{\frac{D}{21}}
        >\frac{9d}{40M}\sqrt{\frac{7}{525}}
        =\sqrt{\frac{27}{25}}\,\frac{d}{40M}
        >\frac{d}{40M}.
    \end{aligned}
\]
More precisely, the ball of radius $d/(40M)$ is contained in the
invariant energy region for every admissible objective, so
Theorem~\ref{thm:invariant} applies to every initialization in
this closed ball.

The theorem's contraction parameter obeys
$\lambda=d\tau/[8(7+8/(\gamma\mu))]=d\tau/120\ge d/400$.
Consequently,
$\mathcal H_k\le(1-d/400)^k\mathcal H_0$ on the stated ball.
The lower guarantee applies to the full function class and hence
also to its smooth subclass. Together with the smooth periodic
obstruction, this proves Corollary~\ref{cor:explicit_radius}.

\section{Global Non-Acceleration}
\label{app:nonaccel}

We prove the parameter coverage and rate bounds in
Section~\ref{sec:global_obstruction}. Building on the projected-cycle construction of
\citet{fotopoulos2026}, we use a fixed phase and an elementary
period choice to obtain an attracting cycle whenever
$L/\mu\ge24$ and
$12(1-\beta)\le\gamma L<2(1+\beta)$.
Quadratic objectives cover the remaining parameters and yield
\eqref{eq:global_nonacceleration}, the global non-acceleration
phenomenon established by \citet{goujaud2025,fotopoulos2026}.

\subsection{Uniform factor}

Fix $0<\mu<L$, $M_\nu>0$, and $0<\nu\le1$, and recall
$\mathcal G:=\mathcal F_{\mu,L,M_\nu}^{1,\nu}$ and $\kappa:=L/\mu$.
For fixed parameters $\gamma>0$ and $0\le\beta<1$, define
\[
    Q_k(\gamma,\beta)
    :=
    \sup_{\substack{f\in\mathcal G\\z_0\ne0}}
    \frac{\|z_k\|_2}{\|z_0\|_2},
    ~~
    \rho_{\gamma,\beta}
    :=\limsup_{k\to\infty}Q_k(\gamma,\beta)^{1/k},
\]
where $z_k:=(x_k-x_\star,v_k)$ as in~\eqref{eq:global_factor}.
Then $\rho_\star=\inf_{\gamma>0,\,0\le\beta<1}\rho_{\gamma,\beta}$.
Since $|f'(x)|\le L|x-x_\star|$, the heavy-ball map satisfies
$\|z_{k+1}\|_2\le C_{\gamma,\beta,L}\|z_k\|_2$, uniformly over
$f\in\mathcal G$, for
$C_{\gamma,\beta,L}:=((1+\gamma L)^2+(\gamma L)^2+2\beta^2)^{1/2}$.
Thus every $Q_k$ is finite. If an admissible objective has a
nonconstant $K$-cycle, starting at a cycle phase gives
$Q_{mK}(\gamma,\beta)\ge1$ for every integer $m\ge1$, and therefore
$\rho_{\gamma,\beta}\ge1$.

\begin{theorem}[Global non-acceleration]
\label{app:nonaccel:theorem}
If $\kappa\ge24$, then
\begin{equation}
    \frac{\kappa-12}{\kappa+12}
    \le\rho_\star
    \le\frac{\kappa-1}{\kappa+1}.
    \label{app:eq:nonaccel_bounds}
\end{equation}
The same bounds hold when $\mathcal G$ is restricted to $C^\infty$
objectives. Moreover, every parameter pair satisfying
$12(1-\beta)\le\gamma L<2(1+\beta)$ admits a $C^\infty$ objective
in $\mathcal G$ with a locally attracting nonconstant periodic orbit.
Consequently, convergence for every objective and every initial state
requires $\gamma L<12(1-\beta)$.
\end{theorem}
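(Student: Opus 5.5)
The plan is to prove the two inequalities in \eqref{app:eq:nonaccel_bounds} separately, and to obtain the parameter-coverage statement along the way.

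\textbf{Upper bound.} Take $\beta=0$ and $\gamma=2/(L+\mu)$, so heavy-ball is gradient descent. For every $f\in\mathcal G$, write $x_{k+1}-x_\star=(I-\gamma\overline H_k)(x_k-x_\star)$, where $\overline H_k$ is the averaged Hessian along the segment to $x_\star$. Since $\mu\le\overline H_k\le L$, this gives $|x_{k+1}-x_\star|\le\frac{\kappa-1}{\kappa+1}|x_k-x_\star|$. The velocity satisfies $|v_{k+1}|\le\gamma L|x_k-x_\star|$, so $Q_k\le C\bigl(\frac{\kappa-1}{\kappa+1}\bigr)^{k-1}$ uniformly over $f\in\mathcal G$, and $\rho_{\gamma,0}\le(\kappa-1)/(\kappa+1)$. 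This uses only a quadratic-free mean-value argument, so it also holds on the $C^\infty$ subclass.

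\textbf{Lower bound: three parameter regions.} Fix $\gamma>0$ and $0\le\beta<1$, and show $\rho_{\gamma,\beta}\ge(\kappa-12)/(\kappa+12)$ in each region.
\begin{enumerate}
\item If $h\ge2b$, the quadratic $Lx^2/2$ (which is $C^\infty$ and in $\mathcal G$) has a root of modulus at least one, so $\rho\ge1$.
\item If $12d\le h<2b$, build a locally attracting cycle; then $Q_{mK}\ge1$ and $\rho\ge1$ by the remark preceding the theorem.
\item If $h<\min\{2b,12d\}$, use the quadratic $\mu x^2/2$. Starting at an eigenvector of the block $J_\mu$ gives $Q_k\ge\rho(J_\mu)^k$, so it suffices to show $\rho(J_\mu)\ge\rho_0:=1-\delta$, with $\delta:=24/(\kappa+12)$.
\end{enumerate}

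\textbf{Region 3.} Put $\varepsilon:=\gamma\mu=h/\kappa<12d/\kappa$ and $p(r)=r^2-(b-\varepsilon)r+\beta$. Expanding gives $p(\rho_0)=\delta^2-d\delta+\varepsilon(1-\delta)$. Split into two cases.
\begin{itemize}
\item If the roots are nonreal, the modulus is $\sqrt{1-d}$. Complex roots force $(b-\varepsilon)^2<4\beta$, which with $\varepsilon<12d/\kappa$ should bound $d$ above by $O(1/\kappa)$ and give $\sqrt{1-d}\ge\rho_0$.
\item If the roots are real and $p(\rho_0)\le0$, then $\rho_0$ lies between them, so the larger root is at least $\rho_0$. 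Using $\varepsilon<12d/\kappa$, $p(\rho_0)\le0$ reduces to $\delta\le d(1-12(1-\delta)/(\kappa\delta))$ up to the $\delta^2$ term. The constant $24=2\cdot12$ is chosen so that $12/(\kappa\delta)\approx1/2$, making this tractable.
\item If instead $d$ is small relative to $\delta$, then $\sqrt\beta\approx1-d/2$ already exceeds $\rho_0$ via the product of the roots.
\end{itemize}
I expect only elementary algebra here, using $\kappa\ge24$ (so $\delta\le2/3$), but the case boundaries must be arranged carefully.

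\textbf{Region 2: the cycle (main obstacle).} Following the text, take $\theta=2\pi/K$, $\phi=\theta/4$, $x_j=\cos(j\theta-\phi)$, and prescribe $g_j$ by \eqref{eq:cycle_interpolation}. The steps are:
\begin{enumerate}
\item Prove the secant estimate $|\gamma(g_i-g_j)/(x_i-x_j)-a_K|<4d$. The difference quotient of the damping term $d\sin u/\cos u$-type data, $ds(\sin u_i-\sin u_j)/(\cos u_i-\cos u_j)$, is a cotangent of the half-sum of angles. The choice $\phi=\theta/4$ keeps half-sums away from multiples of $\pi$, bounding this by $4d$ after using $s\le\theta$. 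This is the step I expect to be hardest.
\item Choose the minimal $K\ge3$ with $a_K\le2h/3$. The ratio $a_{K-1}/a_K\le(K/(K-1))^2\cdot(\text{const})\le16/9$ gives $a_K>3h/8$ when $K>3$. When $K=3$, use $a_3=3b/2>3h/8$ since $h<2b$.
\item Conclude that all secant slopes lie in $(3h/8-4d,\,2h/3+4d)\subset(h/24,h)$, using $d\le h/12$, hence in $(\gamma\mu,\gamma L)$ when $\kappa\ge24$.
\item Since the sorted node slopes are strictly inside the interval, interpolate by a $C^\infty$ increasing $f'$ with derivative in $[\mu,L]$ that is affine with slope $a_K/\gamma$ near each node. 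For example, smooth a piecewise-linear interpolant using flat transition profiles as in Appendix~\ref{app:holder}, and extend linearly with slope $a_K/\gamma$ outside the nodes.
\item The node curvature $a_K/\gamma$ gives the same Jacobian as in Appendix~\ref{app:holder}, whose roots are strictly stable, so the cycle is locally attracting.
\item Rescale by $f_R$ with $R$ large to meet the Hölder bound $M_\nu$, via the scaling argument used for \eqref{eq:radius_scaling}.
\end{enumerate}
This also proves the ``moreover'' clause, and the necessity of $\gamma L<12(1-\beta)$ follows immediately.
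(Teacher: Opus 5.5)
Your outline follows the paper's proof in nearly every step: the gradient-descent upper bound, the three-region split, the cotangent secant bound with $\phi=\theta/4$, the smallest $K$ with $a_K\le 2h/3$, and the final dilation. Using node curvature $a_K/\gamma$ instead of the paper's $(\mu+L)/2$ is fine, since $3h/8<a_K\le 2h/3$ puts $a_K$ strictly inside $(\gamma\mu,\gamma L)$. It also lets you reuse the stable Jacobian of Appendix~\ref{app:holder}. Two places need comment.

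\textbf{The interpolation construction you suggest fails here.} Your existence claim is true, but the construction you offer for it does not work. The flat middle-third profiles of Appendix~\ref{app:holder} were admissible there only because $d$ was taken small. In this theorem $d$ can be as large as $h/12$, and there is no small parameter.

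Suppose you write $f'=(a_K/\gamma)x+\widetilde H$, with $\widetilde H$ a flat transition profile interpolating the residuals. On each gap this gives $\gamma f''=a_K+(\gamma s_i-a_K)\,\sigma'\bigl((x-y_i)/\Delta_i\bigr)$. Since $\sigma$ rises by one over an interval of length $1/3$, $\|\sigma'\|_\infty\ge3$. Concretely, take $\kappa=24$, $d=1/4$, $h=3$. Your rule selects $K=4$, and the consecutive sorted secants are $\gamma s_i=\tfrac74\mp\tfrac14(1+\sqrt2)$. Then $\gamma f''$ exceeds $3.5>\gamma L$ on one gap and becomes negative on another.

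Smoothing the kinks of a piecewise-linear interpolant does not rescue this either. Without compensating the lost integral of $f''$, it breaks exact interpolation at the nodes.

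The missing ingredient is the paper's compensated thin-layer construction. On each gap, set $f''=c+(s_i-c)(\Delta_i/J_i)\psi_i$. Here $\psi_i\in C_c^\infty((y_i,y_{i+1}))$ satisfies $0\le\psi_i\le1$, equals one outside thin boundary layers, and has $J_i=\int\psi_i$ close enough to $\Delta_i$. This keeps $\int_{y_i}^{y_{i+1}}f''=s_i\Delta_i$ exactly, gives $f''=c$ near the nodes, and keeps $f''$ between $c$ and a value arbitrarily close to $s_i$. It therefore works with $c=a_K/\gamma$, because every $s_i$ and $c$ lie strictly inside $(\mu,L)$.

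\textbf{Region 3 is a different but valid route, and it closes exactly.} The identity $\delta-12(1-\delta)/\kappa=12/\kappa$, together with $\varepsilon<12d/\kappa$ and $1-\delta>0$, gives $p(\rho_0)<\delta^2-12d/\kappa$.
If $d\ge\kappa\delta^2/12$, then $p(\rho_0)<0$, so some real root exceeds $\rho_0$.
If $d<\kappa\delta^2/12$, use $\kappa\delta^2/12=48\kappa/(\kappa+12)^2=1-(1-\delta)^2$ to get $\beta>\rho_0^2$. Since the root moduli multiply to $\beta$, some root has modulus at least $\sqrt\beta>\rho_0$.
The two thresholds coincide, so you need neither a separate nonreal-root case nor any approximation.

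The paper instead bounds the root sum, $b-\alpha\le r+\beta/r$, to get $r\ge(d-\alpha)/(d+\alpha)$ for every parameter pair. That yields a bound in the single ratio $\alpha/d$, whereas yours is a direct check at the target factor.

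\textbf{Minor points.} The ratio bound must be exactly $a_{K-1}/a_K\le(K/(K-1))^2$, from concavity of $\sin$; any extra constant above one would break $16/9$. Your $K=3$ case is vacuous, since $a_3=3b/2>2h/3$ when $h<2b$. For nonreal roots, use $Q_k\ge\|J_\mu^k\|_2\ge\rho(J_\mu)^k$ rather than a (complex) eigenvector.
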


Throughout the proof, $b:=1+\beta$, $d:=1-\beta$, and $h:=\gamma L$.

\subsection{Cycle secants}

For an integer $K\ge3$, set $\theta:=2\pi/K$,
$u_j:=j\theta-\theta/4$, and $x_j:=\cos u_j$ for $0\le j<K$.
Indices are interpreted modulo $K$. These positions are distinct:
equality of two cosines requires either $i-j\in K\mathbb Z$ or
$i+j-\tfrac12\in K\mathbb Z$. The second condition is impossible,
and the first implies $i=j$.

Prescribe the gradient values
\begin{equation}
    g_j
    :=\frac{b x_j-\beta x_{j-1}-x_{j+1}}{\gamma}.
    \label{app:eq:nonaccel_gradients}
\end{equation}
Any objective interpolating these values has the prescribed
$K$-cycle. With $a_K:=b(1-\cos\theta)$ and
$\eta_K:=d\sin\theta$, the cycle identity gives
$\gamma g_j=a_K\cos u_j+\eta_K\sin u_j$.
For $i\ne j$, the sum-to-product identities therefore yield
\begin{equation}
    \gamma\frac{g_i-g_j}{x_i-x_j}
    =a_K-\eta_K
    \cot\left(\frac{\pi(i+j-\tfrac12)}{K}\right).
    \label{app:eq:nonaccel_secants}
\end{equation}
In~\eqref{app:eq:nonaccel_secants}, the cotangent argument has
distance at least $\pi/(2K)$ from
$\pi\mathbb Z$. Since $\cot$ decreases on $(0,\pi/2]$, every
normalized secant lies in $[a_K-D_K,a_K+D_K]$, where
\[
    \begin{aligned}
        D_K
        :=d\sin\frac{2\pi}{K}\cot\frac{\pi}{2K}
        =4d\cos^2\frac{\pi}{2K}\cos\frac{\pi}{K}
        <4d.
    \end{aligned}
\]
This proves the secant estimate in
Section~\ref{sec:global_obstruction}.

Suppose now that $12d\le h<2b$. Choose the smallest integer
$K\ge3$ satisfying $a_K\le2h/3$. Such an integer exists because
$a_K\to0$ as $K\to\infty$. Since
$a_3=3b/2>2h/3$, the selected period satisfies $K\ge4$ and
$a_{K-1}>2h/3$. Concavity of sine on $[0,\pi]$, applied with
$\pi/K=((K-1)/K)(\pi/(K-1))$, gives
\[
    \frac{a_K}{a_{K-1}}
    =\left(
        \frac{\sin(\pi/K)}{\sin(\pi/(K-1))}
      \right)^2
    \ge\left(\frac{K-1}{K}\right)^2
    \ge\frac9{16}.
\]
Consequently, $3h/8<a_K\le2h/3$. Combining this with
$D_K<4d\le h/3$ and $\kappa\ge24$ yields
\[
    \begin{aligned}
        a_K-D_K
        &>\frac{3h}{8}-\frac h3
          =\frac h{24}\ge\frac h\kappa=\gamma\mu,\\
        a_K+D_K
        &<\frac{2h}{3}+\frac h3=h=\gamma L.
    \end{aligned}
\]
Thus every derivative secant prescribed by
\eqref{app:eq:nonaccel_gradients} lies strictly between $\mu$ and $L$.

\subsection{Smooth realization}

Sort the positions as $y_1<\cdots<y_K$ and let
$\widetilde g_i$ be the corresponding prescribed gradient values.
Define $\Delta_i:=y_{i+1}-y_i$ and
$s_i:=(\widetilde g_{i+1}-\widetilde g_i)/\Delta_i$ for
$1\le i<K$. The preceding bounds give $s_i\in(\mu,L)$.
Fix a common node curvature $c:=(\mu+L)/2$.

For each interval, choose a smooth cutoff
$\psi_i\in C_c^\infty((y_i,y_{i+1}))$ satisfying
$0\le\psi_i\le1$. Its integral
$J_i:=\int_{y_i}^{y_{i+1}}\psi_i(x)\,dx$ can be made arbitrarily
close to $\Delta_i$ by taking $\psi_i=1$ except near the endpoints.
Choose it close enough that
\[
    \left(\frac{\Delta_i}{J_i}-1\right)|s_i-c|
    <\min\{s_i-\mu,L-s_i\}
\]
whenever $s_i\ne c$. Define
\[
    w(x)
    :=c+(s_i-c)\frac{\Delta_i}{J_i}\psi_i(x),
    ~~ y_i\le x\le y_{i+1},
\]
and set $w(x):=c$ outside $[y_1,y_K]$.
On each interval, $w$ lies between $c$ and
$c+(s_i-c)\Delta_i/J_i$; both belong to $(\mu,L)$ by the choice
of $J_i$. The cutoffs vanish near their endpoints, so $w$ is globally
smooth and equals $c$ near every interpolation node. Also,
$\int_{y_i}^{y_{i+1}}w(x)\,dx=s_i\Delta_i$.

Define
\[
    g(x):=\widetilde g_1+\int_{y_1}^x w(s)\,ds,
    ~~
    f(x):=\int_{y_1}^x g(s)\,ds.
\]
The interval integral identities give $g(y_i)=\widetilde g_i$
successively for all $i$. Hence $f\in C^\infty(\mathbb R)$,
$\mu<f''<L$, and $f'(x_j)=g_j$. Equation
\eqref{app:eq:nonaccel_gradients} then gives the exact nonconstant
cycle with phases $(x_j,x_j-x_{j-1})$.

\paragraph{Local attraction.}
Near each node the gradient is affine with the same slope $c$.
Writing $e_k$ and $u_k$ for position and velocity errors relative
to a cycle trajectory, the error update in these neighborhoods is
\[
    \binom{e_{k+1}}{u_{k+1}}
    =J_c\binom{e_k}{u_k},
    ~~
    J_c:=
    \begin{pmatrix}
        1-\gamma c&\beta\\
        -\gamma c&\beta
    \end{pmatrix}.
\]
Its characteristic polynomial is
$p_c(\zeta):=\zeta^2-(b-\gamma c)\zeta+\beta$.
We have $p_c(1)=\gamma c>0$,
$p_c(-1)=2b-\gamma c>0$, and $0\le\beta<1$ because
$0<\gamma c<h<2b$. If the roots are nonreal, their common modulus
is $\sqrt\beta<1$. If they are real, their product is
$\beta\in[0,1)$; a root at least $1$ would force $p_c(1)\le0$,
and a root at most $-1$ would force $p_c(-1)\le0$.
Thus both roots lie strictly inside the unit disk.

Choose $r\in(\rho(J_c),1)$ and $C\ge1$ such that
$\|J_c^k\|_2\le Cr^k$ for every $k\ge0$.
Let $\varepsilon>0$ be small enough that every interval
$(x_j-\varepsilon,x_j+\varepsilon)$ lies inside an affine
neighborhood of its node. If the initial position--velocity error
has norm less than $\varepsilon/C$, induction gives
$\|(e_k,u_k)\|_2\le Cr^k\|(e_0,u_0)\|_2<\varepsilon$.
Every update therefore remains in the appropriate neighborhood,
and the error converges to zero. This proves local attraction of
the cycle in position--velocity space.

\paragraph{Hessian regularity.}
The derivative $w'$ has compact support, so
$B:=\|w'\|_\infty<\infty$. For all $x,y\in\mathbb R$,
\[
    |f''(x)-f''(y)|
    \le\min\{B|x-y|,L-\mu\}
    \le B^\nu(L-\mu)^{1-\nu}|x-y|^\nu.
\]
Thus $[f'']_\nu<\infty$. For $\ell>0$, define
$f_\ell(x):=\ell^2f(x/\ell)$. Then
$f_\ell''(x)=f''(x/\ell)$ and
$[f_\ell'']_\nu=\ell^{-\nu}[f'']_\nu$.
Taking
$\ell:=\max\{1,([f'']_\nu/M_\nu)^{1/\nu}\}$
gives $f_\ell\in\mathcal G\cap C^\infty$.
The change of state $(x,v)\mapsto(\ell x,\ell v)$ conjugates the
two heavy-ball maps with the same parameters. It therefore preserves
the periodic orbit and its local attraction. This proves the cycle
assertion of Theorem~\ref{app:nonaccel:theorem}.

\subsection{Rate bounds}

\begin{proof}[Proof of Theorem~\ref{app:nonaccel:theorem}]
Fix any $\gamma>0$ and $0\le\beta<1$.
We first prove the lower bound for this parameter pair.

If $h\ge2b$, choose $f(x):=Lx^2/2$.
Its iteration matrix has characteristic polynomial
$p_L(\zeta):=\zeta^2-(b-h)\zeta+\beta$.
Since $p_L(-1)=2b-h\le0$ and $p_L(\zeta)\to+\infty$ as
$\zeta\to-\infty$, it has a real root at or below $-1$.
Initializing along the corresponding eigenvector produces a
nondecaying trajectory, and $\rho_{\gamma,\beta}\ge1$.

If $12d\le h<2b$, the preceding construction gives a nonconstant
periodic orbit for a $C^\infty$ objective in $\mathcal G$.
Again, $\rho_{\gamma,\beta}\ge1$.

It remains to consider $h<2b$ and $h<12d$.
Choose $f(x):=\mu x^2/2$ and set $\alpha:=\gamma\mu=h/\kappa$.
Let $r$ be the largest modulus of the roots of
$\zeta^2-(b-\alpha)\zeta+\beta$.
For this quadratic, the supremum over initial states is exactly
the operator norm of the $k$th power of its position--velocity
iteration matrix. The spectral radius formula therefore implies
$\rho_{\gamma,\beta}\ge r$.
If $r\ge1$, the required lower bound follows immediately.
The case $r=0$ would imply $\beta=0$ and $\alpha=1$, whereas
$\alpha<12d/\kappa\le d/2$. We may consequently assume $0<r<1$.

The product of the roots gives $\beta\le r^2$, and their sum satisfies
$b-\alpha\le r+\beta/r$. Indeed, for two nonnegative real roots
this is equality; for two nonpositive real roots it follows from
the sign of their sum; for conjugate roots, $\beta=r^2$ and the
sum is at most $2r$. The same inequality holds when one root is
zero. Rearranging gives
\[
    \alpha\ge(1-r)\left(1-\frac\beta r\right).
\]
Moreover,
\[
    \frac{1-\beta/r}{1-\beta}-\frac1{1+r}
    =\frac{r^2-\beta}{r(1-\beta)(1+r)}\ge0.
\]
It follows that $\alpha/d\ge(1-r)/(1+r)$, and hence
\[
    \rho_{\gamma,\beta}
    \ge r
    \ge\frac{d-\alpha}{d+\alpha}
    >\frac{1-12/\kappa}{1+12/\kappa}
    =\frac{\kappa-12}{\kappa+12}.
\]
Taking the infimum over all parameter pairs proves the non-strict
lower bound in~\eqref{app:eq:nonaccel_bounds}.

For the upper bound, choose $\beta:=0$ and
$\gamma:=2/(L+\mu)$, and set
$r_{\mathrm{GD}}:=(L-\mu)/(L+\mu)\in(0,1)$.
For every $f\in\mathcal G$, integration of the Hessian along the
segment from $x_\star$ to $x$ gives
$f'(x)=a(x)(x-x_\star)$ for some $a(x)\in[\mu,L]$.
Therefore
\[
    |x_{k+1}-x_\star|
    =|1-\gamma a(x_k)|\,|x_k-x_\star|
    \le r_{\mathrm{GD}}|x_k-x_\star|.
\]
The initial velocity has no effect on these GD updates. For $k\ge1$,
$v_k=x_k-x_{k-1}$ consequently satisfies
$|v_k|\le(1+r_{\mathrm{GD}})r_{\mathrm{GD}}^{k-1}|x_0-x_\star|$.
Combining the position and velocity estimates gives the uniform bound
\[
    Q_k(\gamma,0)
    \le\sqrt{r_{\mathrm{GD}}^2+(1+r_{\mathrm{GD}})^2}\,
        r_{\mathrm{GD}}^{k-1}.
\]
Taking $k$th roots and the limit yields
$\rho_{\gamma,0}\le r_{\mathrm{GD}}$, proving the upper bound.
The curvature-$\mu$ quadratic gives the reverse inequality for this
GD pair, so its uniform factor equals $r_{\mathrm{GD}}$.

All lower-bound objectives are $C^\infty$ and have the prescribed
Hessian H\"older bound, while the GD estimate holds for the whole
class. The same argument thus proves the bounds on the smooth
subclass. Finally, the first two parameter cases each admit a
nonconvergent trajectory. Global convergence therefore requires
$h<2b$ and $h<12d$, proving the stated necessary condition.
\end{proof}

The bounds imply
$2/(\kappa+1)\le1-\rho_\star\le24/(\kappa+12)$.
Thus $1-\rho_\star=\Theta(\kappa^{-1})$, with constants independent
of $M_\nu$ and $\nu$.

\section{Necessary Cycle Size}
\label{app:cycle_size}

A work identity provides a lower bound on the size of every
nonconstant periodic orbit. This complements the constructed cycles
in Section~\ref{sec:cycles}.

\begin{proposition}[Necessary cycle scale]
\label{app:prop:cycle_size}
Let $f\in\mathcal F_{\mu,L,M_\nu}^{n,\nu}$, with $M_\nu>0$,
and let $\gamma>0$ and $0\le\beta<1$ satisfy
$\tau=2(1+\beta)-\gamma L>0$.
Every nonconstant $K$-periodic heavy-ball orbit satisfies
\[
    \operatorname{diam}\{x_0,\ldots,x_{K-1}\}
    \ge\max_i\|x_i-x_{i-1}\|
    \ge
    \left(\frac{2(\nu+2)d\tau}{\gamma bM_\nu}\right)^{1/\nu}.
\]
\end{proposition}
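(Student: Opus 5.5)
The plan is a work (energy-balance) argument over one period. The key tool is the general one-step identity \eqref{app:eq:general_energy}. It holds for \emph{every} real $t$, because it is a purely algebraic consequence of the heavy-ball update and the definitions of $A_t$, $\overline H$ and $T$. No positivity or invariance property of $\mathcal H_t$ is needed. So I am free to pick the correction parameter that gives the best constant, and the natural choice is the endpoint $t=d/b$.

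With $t=d/b$, the coefficient $-(d-bt)/(4\gamma)$ of $\|v+w\|^2$ vanishes. The identity then reads
\[
    \mathcal H_t(x_{k+1},v_{k+1})-\mathcal H_t(x_k,v_k)
    =-\frac{d}{2\gamma b}\langle w_k,(2bI-\gamma\overline H_k)w_k\rangle-T_k,
\]
where $w_k:=v_{k+1}=x_{k+1}-x_k$. Along a $K$-periodic orbit the states $(x_k,v_k)$ repeat, so $\mathcal H_t$ returns to its initial value. Summing over $k=0,\dots,K-1$ therefore makes the left-hand side telescope to zero. Using $2bI-\gamma\overline H_k\succeq\tau I$ and the trapezoidal bound \eqref{app:eq:holder_remainder}, I will obtain
\[
    \frac{d\tau}{2\gamma b}\sum_{k}\|w_k\|^2
    \le\sum_k|T_k|
    \le\frac{M_\nu}{4(\nu+2)}\Bigl(\max_k\|w_k\|\Bigr)^{\nu}\sum_k\|w_k\|^2 .
\]

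Since the orbit is nonconstant, some displacement $w_k$ is nonzero, so $\sum_k\|w_k\|^2>0$ and I can divide by it. This gives
\[
    \max_k\|w_k\|^\nu\ge\frac{2(\nu+2)d\tau}{\gamma bM_\nu},
\]
which is exactly the stated bound once $1/\nu$-th powers are taken. The displacements $w_k$ are precisely the consecutive differences $x_{i}-x_{i-1}$ (indices mod $K$). Each such difference is bounded by the diameter of the position set, which gives the first inequality.

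The only delicate point is justifying the use of $t=d/b$ rather than the paper's $t=d/(2b)$. I will check that the derivation of \eqref{app:eq:general_energy} never used $t<d/b$ or $A_t>0$: that restriction was imposed only to make the quadratic terms dissipative and the energy coercive, and neither property is needed for a telescoping sum. Everything else is a direct substitution.
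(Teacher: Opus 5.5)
Your proof is correct and reproduces the paper's constant exactly, but it is organized differently. The paper telescopes the uncorrected energy, which is your identity at $t=0$. This gives the work identity $\sum_i T_i=-\frac{d}{4\gamma}\sum_i\|v_i+v_{i+1}\|^2$. The paper then needs a second summation over the period, $b(S-C)=\gamma\sum_i\langle g_i-g_{i-1},v_i\rangle\le\gamma LS$, to bound $\sum_i\|v_i+v_{i+1}\|^2=2(S+C)$ below by $(2\tau/b)S$.

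You instead telescope $\mathcal H_t$ at the endpoint $t=d/b$. There the $\|v+w\|^2$ term vanishes and the curvature is already built into the one-step term $\langle w,(2bI-\gamma\overline H)w\rangle$, so no separate cross-term estimate is needed. Comparing the two telescoped identities gives $\frac14\sum_i\|v_i+v_{i+1}\|^2=\frac1{2b}\sum_k\langle w_k,(2bI-\gamma\overline H_k)w_k\rangle$ on every periodic orbit. This is the paper's cross-term identity rearranged, so both arguments rest on the same computation.

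Your packaging is shorter given \eqref{app:eq:general_energy}. It also shows that the necessary scale is the full-dissipation, $t=d/b$ endpoint of the invariance analysis: your lower bound on $\max_k\|w_k\|^\nu$ equals $4R_\nu^\nu$. One factor of $2$ comes from $t$, and the other from not reserving half the dissipation. The paper's version is self-contained, needs neither the corrected energy nor the averaged Hessian, and isolates where the bound $L$ enters.

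Your check that \eqref{app:eq:general_energy} holds for every real $t$ is right. Its derivation uses only $\gamma g=\beta v-w$, $g^+=g+\overline Hw$, and the definition of $A_t$, never $t<d/b$ or $A_t>0$.
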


\begin{proof}
All indices are taken modulo $K$, and all sums run over one period.
Define $v_i:=x_i-x_{i-1}$, $g_i:=\nabla f(x_i)$,
$S:=\sum_i\|v_i\|^2$, and
$C:=\sum_i\langle v_i,v_{i+1}\rangle$.
Nonconstancy gives $S>0$, and the recurrence gives
$\gamma g_i=\beta v_i-v_{i+1}$.
The trapezoidal work over a period is
\begin{equation}
    \begin{aligned}
        W
        &:=\frac12\sum_i
            \langle g_i+g_{i+1},v_{i+1}\rangle\\
        &=\frac{1}{2\gamma}\sum_i
            \langle\beta v_i-dv_{i+1}-v_{i+2},v_{i+1}\rangle\\
        &=-\frac{d}{2\gamma}(S+C)
         =-\frac{d}{4\gamma}\sum_i\|v_i+v_{i+1}\|^2.
    \end{aligned}
    \label{app:eq:cycle_work}
\end{equation}
The potential increments sum to zero, so $W$ is also the sum of
the segmentwise trapezoidal errors.
For a segment from $x$ to $x+w$, write $\psi(u):=f(x+uw)$ and
\[
    T(x,w):=\tfrac12\bigl(\psi'(0)+\psi'(1)\bigr)
                 -\bigl(\psi(1)-\psi(0)\bigr).
\]
Integration by parts and reflection about $u=1/2$ give
\[
    \begin{aligned}
        T(x,w)
        &=\int_0^1(u-\tfrac12)\psi''(u)\,\mathrm du\\
        &=\int_{1/2}^1(u-\tfrac12)
            \langle w,[\nabla^2f(x+uw)
            -\nabla^2f(x+(1-u)w)]w\rangle\,\mathrm du.
    \end{aligned}
\]
Since
$\int_{1/2}^1(u-1/2)(2u-1)^\nu\,\mathrm du=1/[4(\nu+2)]$,
we obtain $|T(x,w)|\le M_\nu\|w\|^{2+\nu}/[4(\nu+2)]$.
Taking absolute values in~\eqref{app:eq:cycle_work} therefore yields
\begin{equation}
    \frac{d}{4\gamma}\sum_i\|v_i+v_{i+1}\|^2
    \le\frac{M_\nu}{4(\nu+2)}\sum_i\|v_i\|^{2+\nu}.
    \label{app:eq:cycle_work_bound}
\end{equation}

To control cancellation between consecutive velocities, use
$\langle g_i-g_{i-1},v_i\rangle\le L\|v_i\|^2$,
which follows by integrating the Hessian along the segment
$[x_{i-1},x_i]$. The recurrence also gives
$\gamma(g_i-g_{i-1})=bv_i-v_{i+1}-\beta v_{i-1}$.
Taking inner products with $v_i$ and summing gives
\[
    b(S-C)
    =\gamma\sum_i\langle g_i-g_{i-1},v_i\rangle
    \le\gamma LS.
\]
It follows that
\begin{equation}
    \sum_i\|v_i+v_{i+1}\|^2
    =2(S+C)\ge\frac{2\tau}{b}S.
    \label{app:eq:cycle_curvature}
\end{equation}
With $R:=\max_i\|v_i\|$, we have
$\sum_i\|v_i\|^{2+\nu}\le R^\nu S$.
Substituting~\eqref{app:eq:cycle_curvature} into
\eqref{app:eq:cycle_work_bound} and cancelling $S>0$ gives
$R^\nu\ge2(\nu+2)d\tau/(\gamma bM_\nu)$.
Every displacement joins two orbit nodes, so the diameter is at least $R$.
\end{proof}

For Lipschitz Hessians the lower bound becomes
$6d\tau/(\gamma bM_1)$. The proof applies to repeated nodes and
does not use attraction of the orbit. If the Hessian is constant,
all trapezoidal errors vanish, and
\eqref{app:eq:cycle_work}--\eqref{app:eq:cycle_curvature} rule out
nonconstant cycles under $d,\tau>0$.

\section{Cycles from Zero Velocity}
\label{app:zero_init}

The following example describes an interval of zero-velocity initial
states attracted to a periodic orbit at the quadratic-optimal
parameters. It clarifies the role of the gradient test in
Corollary~\ref{cor:startup}: the test selects a convergent initial
position for the velocity reset.

\begin{proposition}[Zero-velocity cycling]
\label{app:prop:zero_init}
There exists $f\in C^\infty(\mathbb R)$ with $1\le f''\le100$
and bounded third derivative such that heavy-ball with
$\gamma=4/121$ and $\beta=81/121$ has the locally attracting cycle
$-1\to0\to1\to-1$.
Every initialization $x_{-1}=x_0\in[37/50,19/25]$ converges to this
cycle. The curvature bounds $1$ and $100$ are attained on half-lines,
and the stated parameters are quadratic-optimal for these bounds.
\end{proposition}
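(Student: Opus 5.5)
The plan is to build $f$ directly from the gradient data forced by the cycle, using the interpolation scheme of Appendix~\ref{app:nonaccel}, and then certify the zero-velocity basin by tracking finitely many iterates.

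\textbf{Parameters and cycle data.} With $\mu=1$ and $L=100$ we have $\sqrt L=10$ and $\sqrt\mu=1$. Hence $q=9/11$, $\gamma_\star=4/121$ and $\beta_\star=81/121$, which gives quadratic optimality. For the cycle $-1\to0\to1\to-1$, the identity $\gamma f'(x_j)=bx_j-x_{j+1}-\beta x_{j-1}$ with $b=202/121$ forces
\[
f'(-1)=-\tfrac{283}{4},\qquad f'(0)=-10,\qquad f'(1)=\tfrac{323}{4}.
\]
The two consecutive secant slopes are therefore $243/4$ and $363/4$, both strictly inside $(1,100)$.

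\textbf{Construction of $f$.} As in the smooth realization of Appendix~\ref{app:nonaccel}, I will take $f''=w$. Here $w$ is smooth, equal to a common constant $c$ near each node, and adjusted on each gap by a flat bump so that $\int w$ reproduces the prescribed gradient increments. Outside $[-1,1]$, I will let $w$ transition smoothly to the value $100$ on a half-line to the right and to the value $1$ on a half-line to the left, keeping $1\le w\le100$ throughout. All transitions are built from compactly supported $C^\infty$ cutoffs, so $f'''=w'$ is bounded.

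\textbf{Local attraction of the cycle.} Near each node the error dynamics are linear with Jacobian $J_c$. Since $0<\gamma c<2b$, the root argument of Appendix~\ref{app:nonaccel} gives $\rho(J_c)<1$. The same argument yields $C,r$ with $\|J_c^k\|\le Cr^k$, and hence an explicit ball of radius $\varepsilon/C$ that is attracted to the cycle, where $\varepsilon$ is the half-width of the flat neighborhoods. I will keep the choice of $c$ free (e.g.\ $c=121/4$, for which $\gamma c=1$) to make $J_c$ as nice as possible.

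\textbf{The zero-velocity interval.} I will shape $w$ on the region containing $[37/50,19/25]$ and its first few images so that the map is simple to track there. The first HB step from $(x_0,0)$ is $x_1=x_0-\gamma f'(x_0)$. If $f'$ is affine on a neighborhood of $[0.74,0.76]$ with known slope and offset, then the first few iterates of the whole interval are affine images of it. I will compute these images explicitly, say three to six steps, as intervals in position--velocity space, while checking that each stays within a region where $f'$ is affine. The aim is to show that the final image box lies inside the certified basin ball around a cycle phase. The degrees of freedom in $w$ on the gaps (the bump shapes, not just their integrals) give room to steer these images.

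\textbf{Main obstacle.} The hard step is this last quantitative certification. The constraints are simultaneous: $w\in[1,100]$, the gap integrals are fixed by the cycle data, and the iterates of an interval of positive length must land in a small basin. I would first try to make $f$ exactly quadratic with curvature $c$ on a large set containing $[-1,1]$ and the first iterates of the interval, so that all intermediate maps are linear. Then only the entry of one affine segment into the $J_c$-basin needs checking, and that reduces to finitely many exact rational inequalities. If the interval's orbit passes through a transition region, I would instead bound the images by monotonicity, using the mean-value form $f'(x)-f'(y)=a(x-y)$ with $a\in[1,100]$. This is cruder, and it is where I expect any failure to occur.
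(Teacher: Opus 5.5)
Your outline has the same architecture as the paper's proof: a common node curvature $c$, curvatures $1$ and $100$ on half-lines, and exact affine tracking of the zero-velocity interval into a certified basin. But it stops exactly where the content of the proposition lies. You never fix $w$, and you never verify that $[37/50,19/25]$ enters the basin. Since the statement is an existence claim about this specific interval, this is a genuine gap, not a routine omission.

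Moreover, the simplification you propose cannot work. $f'$ cannot be affine on any set containing $[-1,1]$, because the forced secant slopes are $243/4$ on $[-1,0]$ and $363/4$ on $[0,1]$. The starting interval also sits inside $(0,1)$, where $f''$ must average $363/4$. So $f'$ necessarily has several affine pieces near the cycle, and the orbit of the whole interval must be routed through them while staying clear of every transition zone. The orbit also leaves $[-1,1]$ (in the paper it reaches about $-1.24$ and $1.08$), so the switch to curvatures $1$ and $100$ must sit well outside.

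Your fallback via $f'(x)-f'(y)=a(x-y)$ with $a\in[1,100]$ is unlikely to close. In $x_{k+1}=bx_k-\beta x_{k-1}-\gamma f'(x_k)$ the coefficient of $x_k$ then ranges over $[-198/121,198/121]$. Enclosures that discard the correlation between successive iterates therefore roughly double in width per step, starting from width $0.02$, while the basin radius is about $0.05$.

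The missing ingredient is an explicit piecewise-affine gradient with this routing built in. The paper takes the following slopes:
\begin{itemize}
\item $101/2$ on $[-2,-15/16]$, $[-1/16,1/16]$ and $[15/16,2]$;
\item $871/14$ on $[-15/16,-1/16]$;
\item $193/2$ on $[1/16,15/16]$;
\item $1$ on $(-\infty,-2]$ and $100$ on $[2,\infty)$.
\end{itemize}
It then mollifies at scale $1/100$. The choice $c=101/2$ makes $b-\gamma c=0$, so the error recursion is the transient-free $e_{k+1}=-\beta e_{k-1}$. The basin is then simply ``two consecutive errors below $21/400$.'' A generic $c$, such as your $121/4$, costs the constant $C$ in $\|J_c^k\|\le Cr^k$ and shrinks the certified ball accordingly.

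The paper writes $x_k=\alpha_k z+\eta_k$ and checks, for $k=0,\dots,8$, that each enclosure lies in a single affine branch, more than $1/100$ from every breakpoint. This keeps the affine dependence on the same $z$, so the endpoint checks are valid and the mollified $g$ coincides with $g_0$ along the orbit. The check ends with $|x_7+1|,|x_8|<21/400$. The margin is thin, $|x_8|\le 0.052213$ against $0.0525$, so this certification step cannot be left schematic.
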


\begin{proof}
Define $(t)_+:=\max\{t,0\}$ and
\[
    \begin{aligned}
        g_0(x):={}\frac{101}{2}x-\frac{81}{4}
        &+\frac{82}{7}\bigl[(x+15/16)_+-(x+1/16)_+\bigr]\\
        &+46\bigl[(x-1/16)_+-(x-15/16)_+\bigr]\\
        &+\frac{99}{2}(-x-2)_+
         +\frac{99}{2}(x-2)_+.
    \end{aligned}
\]
This continuous piecewise affine function has breakpoints
$-2,-15/16,-1/16,1/16,15/16,2$ and successive slopes
\[
    1,~\frac{101}{2},~\frac{871}{14},~
    \frac{101}{2},~\frac{193}{2},~
    \frac{101}{2},~100.
\]
Choose a nonnegative even mollifier
$\varphi\in C_c^\infty((-1,1))$ with integral one.
Set $\varepsilon:=1/100$,
$\varphi_\varepsilon(t):=\varepsilon^{-1}\varphi(t/\varepsilon)$,
$g:=g_0*\varphi_\varepsilon$, and $f(x):=\int_0^xg(t)\,\mathrm dt$.
The almost-everywhere derivative of $g_0$ lies in $[1,100]$,
so convolution gives $1\le g'\le100$.
Also $g''=g_0'*\varphi_\varepsilon'$ and
$\|g''\|_\infty\le100\|\varphi_\varepsilon'\|_1<\infty$.
Thus $f$ is smooth with the asserted curvature and regularity.
The extreme curvatures are attained because $f''=1$ on
$(-\infty,-2-\varepsilon]$ and $f''=100$ on
$[2+\varepsilon,\infty)$.

Convolution preserves $g_0$ at every point farther than
$\varepsilon$ from its breakpoints. Indeed, $g_0$ is affine on the
convolution interval, and evenness gives
$\int t\varphi_\varepsilon(t)\,\mathrm dt=0$.
In particular, with $r:=1/16-1/100=21/400$,
$g$ is affine with slope $c:=101/2$ on the open radius-$r$
neighborhood of each of $-1,0,1$. Its values at the nodes are
\[
    (g(-1),g(0),g(1))=(-283/4,-10,323/4).
\]
Substitution into
$x_{k+1}=bx_k-\beta x_{k-1}-\gamma g(x_k)$ verifies the cycle.
Since $b-\gamma c=0$, a nearby trajectory with phase errors
$e_k:=x_k-\bar x_k$, where $\bar x_k$ follows the exact cycle,
satisfies
\begin{equation}
    e_{k+1}=-\beta e_{k-1}
    \label{app:eq:zero_error}
\end{equation}
whenever its current position belongs to the corresponding affine
neighborhood. If two consecutive errors have magnitude less than
$r$, this identity keeps all later errors in those neighborhoods
and makes them tend to zero. This proves local attraction in the
two-position state. The transformation
$(x_k,x_{k-1})\mapsto(x_k,x_k-x_{k-1})$ is invertible, so attraction
also holds in position--velocity coordinates.

It remains to show that the full zero-velocity interval enters
these neighborhoods. For $z\in[37/50,19/25]$, first use $g_0$ and
initialize $x_{-1}=x_0=z$.
The required affine branches and their updates are
\[
\begin{array}{c|c|c}
    k&g_0(x_k)&x_{k+1}\\\hline
    0,2&
    \frac{193}{2}x_k-\frac{103}{8}&
    -\frac{184}{121}x_k-\frac{81}{121}x_{k-1}
        +\frac{103}{242}\\
    1,4,7&
    \frac{101}{2}x_k-\frac{81}{4}&
    -\frac{81}{121}x_{k-1}+\frac{81}{121}\\
    3,6&
    \frac{101}{2}x_k+\frac{121}{4}&
    -\frac{81}{121}x_{k-1}-1\\
    5&
    \frac{101}{2}x_k-10&
    -\frac{81}{121}x_{k-1}+\frac{40}{121}
\end{array}
\]
Successive substitution at the two endpoint values of $z$ gives
the following rational enclosures. The integer endpoints in the
last two columns are divided by $10^6$:
\[
\begin{array}{c|rr}
    k&10^6\underline x_k&10^6\overline x_k\\\hline
    0&740000&760000\\
    1&-1238843&-1195041\\
    2&160661&174050\\
    3&960935&1010617\\
    4&-1116513&-1107550\\
    5&-7108&26151\\
    6&1071996&1077996\\
    7&-1017506&-995242\\
    8&-52213&-48195
\end{array}
\]
To verify the enclosures, write $x_k=\alpha_k z+\eta_k$, starting
from $\alpha_{-1}=\alpha_0=1$ and
$\eta_{-1}=\eta_0=0$. Each update in the branch table determines
the next two rational coefficients. At each step, the endpoint
images lie in the same affine branch; therefore the recurrence
remains affine in $z$, and its extrema over the interval occur at
the endpoint initializations. For example, the last two updates give
\[
    x_7=\frac{238617009z-394687966}{214358881},
    ~~
    x_8=\frac{43046721(z-1)}{214358881}.
\]
This calculation keeps the dependence of successive states on the
same initialization $z$.

Every enclosure is farther than $\varepsilon=1/100$ from all
breakpoints of $g_0$. Thus $g=g_0$ on every visited interval,
and the same finite trajectory calculation holds exactly for the
smoothed objective. Taking the reference cycle phases
$\bar x_7=-1$ and $\bar x_8=0$, the table gives
\[
    |x_7+1|\le\frac{17506}{10^6}<\frac{21}{400},
    ~~
    |x_8|\le\frac{52213}{10^6}<\frac{21}{400}.
\]
Equation~\eqref{app:eq:zero_error} now proves convergence to the
cycle for every $z\in[37/50,19/25]$.
Finally, the quadratic-optimal formulas with $\mu=1$ and $L=100$
give $\gamma=4/121$ and $\beta=(9/11)^2=81/121$.
\end{proof}

The example can satisfy any prescribed positive Hessian H\"older
bound. For $0<\nu\le1$, define $M_{0,\nu}:=[f'']_\nu$.
The constructed Hessian is bounded and Lipschitz, so
$M_{0,\nu}<\infty$; explicitly,
$M_{0,\nu}\le99^{1-\nu}\|f'''\|_\infty^\nu$.
For a prescribed $M_\nu>0$, choose
$R\ge(M_{0,\nu}/M_\nu)^{1/\nu}$ and set
$f_R(x):=R^2f(x/R)$.
Then $1\le f_R''\le100$ and $[f_R'']_\nu\le M_\nu$.
The cycle becomes $-R\to0\to R\to-R$, and the zero-velocity
initialization interval becomes $[37R/50,19R/25]$.

\section{Numerical Protocol}
\label{app:numerical_protocol}

This section specifies the objectives, initialization grids,
stopping rules, and gradient accounting used in
Section~\ref{sec:numerics}.
All optimization simulations use double precision on a CPU.
The recorded environment is Python~3.12.14,
NumPy~2.3.5, SciPy~1.17.0, and Matplotlib~3.10.8.

\subsection{Cycle objectives and energy}

Let $S(z):=6z^5-15z^4+10z^3$ for $z\in[0,1]$ and
$H_0(u):=-1+(3/2)S(\min\{|u|,1\})$.
Since $S'$ and $S''$ vanish at both endpoints,
$H_0\in C^2(\mathbb R)$.
Direct differentiation gives
$\|H_0'\|_\infty=45/16$ and
$\|H_0''\|_\infty=5\sqrt3$.
Thus, with
$K_\nu:=(45/8)^{1-\nu}(5\sqrt3)^\nu$,
\[
    |H_0'(u)-H_0'(w)|
    \le
    \min\{45/8,5\sqrt3|u-w|\}
    \le
    K_\nu|u-w|^\nu.
\]
For $R_c:=(dK_\nu/M_\nu)^{1/\nu}$,
the objective in \eqref{eq:numerical_objective}
therefore satisfies $[f'']_\nu\le M_\nu$.
Its curvature lies in
$[3-(69/16)d,\,3+(21/16)d]\subset[1,7/2]$
for $0<d\le0.1$.
The numerical objective is $C^3$
and satisfies the assumptions of the main theorems.

For the three-cycle experiments,
$\mu=\gamma=M_\nu=1$ and $L=7/2$.
Write $c:=3-3d/2$, $u:=x/R_c$, and $p:=v/R_c$.
The normalized update is
\[
    p^+=(1-d)p-cu-dH_0(u),
    ~~
    u^+=u+p^+.
\]
It is independent of $\nu$ and $M_\nu$.
We compute $u_\star$ as the root of
$cu+dH_0(u)=0$ on $[0,1]$
using Brent's method with absolute tolerance $10^{-15}$.

An analytic antiderivative avoids numerical quadrature.
With $a:=|u|$, $z:=\min\{a,1\}$,
and $I(u):=\int_0^u H_0(s)\,\mathrm ds$,
\[
    I(u)
    =
    \operatorname{sgn}(u)
    \left[
        -z+\frac32
            \left(z^6-3z^5+\frac52z^4\right)
        +\frac12(a-1)_+
    \right].
\]
In the normalized scans and startup experiments,
the energy is evaluated as
\[
\begin{aligned}
    \frac{\mathcal H(R_cu,R_cp)}{R_c^2}
    ={}
    \frac c2(u-u_\star)^2
    +d\bigl[
        I(u)-I(u_\star)-H_0(u_\star)(u-u_\star)
    \bigr]
    +
    \frac A4p^2
    -\frac{1-t}{2}p[cu+dH_0(u)].
\end{aligned}
\]
The root identity makes this expression equal to
\eqref{eq:corrected_energy}.
The energy envelopes in Figures~\ref{fig:dynamics_scales}
and~\ref{fig:startup} use the sharper contraction parameter
$\widehat\lambda$ in \eqref{app:eq:lambda_sharp}.
This parameter is at least the simplified $\lambda$
stated in Theorem~\ref{thm:invariant}.

\subsection{Classification and phase scans}

The normalized equilibrium is $(u_\star,0)$,
and the three cycle phases are
$(-1,-2)$, $(0,1)$, and $(1,1)$.
The equilibrium distance is
$\sqrt{(u-u_\star)^2+p^2}$,
and the cycle distance is the minimum Euclidean distance
to these three phase states.
A trajectory receives an equilibrium or cycle label after
six consecutive post-update states have the respective distance
below $10^{-7}$.
Classified trajectories stop in the radial scans and startup comparisons.
Trajectories that do not meet either criterion by the horizon
retain an unresolved label.

The rectangular phase plot uses $\nu=1$ and $d=0.05$,
with 101 equally spaced values of $u_0$ in $[-2,2]$
and 101 equally spaced values of $p_0$ in $[-3,3]$.
All trajectories run for 1800 updates,
and classification uses their final six states.
The resulting counts are 2643 equilibrium trajectories,
7558 cycle trajectories, and zero unresolved trajectories.

For the radial scans, let
$\alpha_j:=2\pi j/64$, $j=0,\ldots,63$,
and take 96 geometrically spaced radii $\rho_\ell$
from $0.01$ to $3$, including both endpoints.
Initial states are
$(u_0,p_0):=
(u_\star+\rho_\ell\cos\alpha_j,\rho_\ell\sin\alpha_j)$.
Each trajectory receives at most $\lceil60/d\rceil$ updates.
The scan classifies every angular and radial grid point.
Table~\ref{app:tab:phase_scan} gives the complete counts.

\begin{table}[!ht]
    \centering
    \begin{tabular}{rrrrr}
        \toprule
        $d$ & Equilibrium & Cycle & Unresolved
        & Smallest cycling radius$/R_c$\\
        \midrule
        $0.100$ & 4542 & 1602 & 0 & 0.3063887\\
        $0.050$ & 4536 & 1608 & 0 & 0.2885345\\
        $0.020$ & 4538 & 1606 & 0 & 0.2885345\\
        $0.010$ & 4536 & 1608 & 0 & 0.2885345\\
        $0.005$ & 4534 & 1610 & 0 & 0.2885345\\
        $0.002$ & 4534 & 1610 & 0 & 0.2885345\\
        $0.001$ & 4534 & 1610 & 0 & 0.2885345\\
        \bottomrule
    \end{tabular}
    \caption{
    Classification of all 6144 initial states at each damping value.
    Radii are measured from the equilibrium
    in the position--velocity norm.
    }
    \label{app:tab:phase_scan}
\end{table}

At $d=0.01$, a refinement uses 128 angles
and 192 geometric radii over the same ranges.
It gives 18,131 equilibrium and 6445 cycle classifications,
with zero unresolved states.
The smallest observed cycling radius is $0.2920936R_c$,
a $1.23\%$ difference from the original scan.
The geometric radial grids are not nested,
so refinement need not decrease the smallest sampled failure distance.
The change quantifies sensitivity to this grid refinement.

For each $d$, the same normalized scan is mapped to physical
coordinates for $\nu\in\{1,3/4,1/2\}$
using the corresponding $R_c$.
The nearly constant ratio of sampled cycling distance to $R_c$ is
shown in the inset of Figure~\ref{fig:dynamics_scales}(c).
The different H\"older exponents change the physical spatial scaling.

\subsection{Startup policies and gradient accounting}

The three-cycle startup experiments use the 101 equally spaced positions
$u_0\in[-2,2]$, including endpoints, with $p_0=0$.
GD uses step size $1/L=2/7$,
direct HB uses $\gamma=1$ and $\beta=1-d$,
and the hybrid starts with GD.
Before each hybrid update, the current gradient is checked against
\eqref{eq:momentum_start}.
When the test first passes, the velocity is reset to zero
and that same gradient is used for the first HB update.
Each optimization update therefore costs one gradient evaluation.
Offline energy diagnostics are excluded from the counts for every method.
For GD, the velocity in the common state-error criterion
is the latest displacement.

All methods use the same six-state confirmation rule
and horizon $\lceil60/d\rceil$.
Table~\ref{app:tab:startup_all} reports all nine parameter combinations.
GD and the hybrid reach the equilibrium tolerance from every start,
giving 909 successful hybrid runs.
The direct-HB classifications and GD counts are reused under
the physical rescalings because their normalized updates
and stopping rules are independent of $\nu$.
The hybrid startup threshold depends on $\nu$.

\begin{table}[!ht]
    \centering
    \begin{tabular}{ccccrr}
        \toprule
        $\beta$ & $\nu$ & HB cycles & GD warm-up
        & Median GD calls & Median hybrid calls\\
        \midrule
        $0.90$ & $1$   & $83/101$ & $1$--$5$ & 16 & 213\\
        $0.90$ & $3/4$ & $83/101$ & $2$--$5$ & 16 & 189\\
        $0.90$ & $1/2$ & $83/101$ & $3$--$7$ & 16 & 149\\
        $0.95$ & $1$   & $83/101$ & $1$--$4$ & 15 & 426\\
        $0.95$ & $3/4$ & $83/101$ & $2$--$5$ & 15 & 402\\
        $0.95$ & $1/2$ & $83/101$ & $3$--$6$ & 15 & 299\\
        $0.99$ & $1$   & $84/101$ & $0$--$4$ & 15 & 2239\\
        $0.99$ & $3/4$ & $84/101$ & $1$--$4$ & 15 & 2012\\
        $0.99$ & $1/2$ & $84/101$ & $2$--$5$ & 15 & 1620\\
        \bottomrule
    \end{tabular}
    \caption{
    All startup cases.
    GD warm-up is the range of updates before resetting
    the velocity and starting HB.
    Median costs include the six-state stopping confirmation.
    }
    \label{app:tab:startup_all}
\end{table}

Across the 909 hybrid runs,
the largest switch ratio
$\|\nabla f(x)\|^2/(2\mu\mathcal B_\nu)$
is approximately $0.99793$.
The largest reset-state energy ratio
$\mathcal H(x,0)/\mathcal B_\nu$ is $0.34950$,
and the largest energy ratio after an HB update is $0.32867$.
The maximum post-switch speed is $0.34679R_\nu$.
Recorded post-switch energy increments are nonpositive
to numerical precision.

For the representative initialization
$(x_0,v_0)=(R_c,0)$ at $d=0.05$ and $\nu=1$,
GD reaches the stopping criterion after 15 calls.
Direct HB is classified as cycling after 658 calls.
The hybrid switches after three GD calls
and reaches the equilibrium criterion after 467 total calls.
In the post-switch energy plot,
the initial energy is recomputed at the reset state $(x,0)$;
the incoming GD displacement is not used as HB's initial velocity.

\subsection{Invariant-region checks}

With seed 104729, we sample 5000 states at each
$\beta\in\{0.9,0.95,0.99,0.999\}$ to check the energy identities.
The largest corrected-identity residual divided by $R_c^2$ is
$5.15\times10^{-14}$, and the largest cycle residual divided by
$R_c$ is $5.13\times10^{-16}$.
For each parameter, 512 uniformly sampled states in the
$d/(40M)$ disk are followed for 240 updates.
The energy budget, energy descent, and displacement bounds hold to
numerical precision throughout these trajectories.
At $\beta=0.999$, the largest energy ratio at the end of this
finite horizon is $0.787$.
The separate exact-arithmetic check of the zero-momentum construction
in Appendix~\ref{app:zero_init} returns PASS.

\subsection{Optimal-parameter startup}
\label{app:polyak_numerics}

The final experiment uses the family in
\eqref{eq:numerical_local_objective}.
Since $S'(s)=30s^2(1-s)^2$ on $[0,1]$,
$\max S'=15/8$. The choice $a_\kappa=15(\kappa-1)/8$
therefore gives $1\le f_\kappa''\le\kappa$ and
$\operatorname{Lip}(f_\kappa'')=1$.
Both endpoint curvatures are attained.
The clipped smoothstep is $C^2$, so $f_\kappa\in C^4$.
The minimizer is zero, and $f_\kappa(x)=x^2/2$ for
$|x|\le a_\kappa$.

For stable evaluation, write $y:=x/a_\kappa$, $u:=v/a_\kappa$,
$z:=|y|$, and $r:=\operatorname{clip}(z-1,0,1)$.
Define $P(r):=(5/2)r^4-3r^5+r^6$.
Integration of the prescribed Hessian gives
\[
    \frac{f_\kappa'(a_\kappa y)}{a_\kappa}
    =y+(\kappa-1)\operatorname{sgn}(y)
       \bigl[P(r)+(z-2)_+\bigr].
\]
The normalized objective gap is
\[
    \frac{f_\kappa(a_\kappa y)}{a_\kappa^2}
    =\frac{y^2}{2}+(\kappa-1)J(z),
\]
where
\[
    J(z):=
    \begin{cases}
        0,&0\le z\le1,\\
        \frac12(z-1)^5-\frac12(z-1)^6+\frac17(z-1)^7,
            &1<z\le2,\\
        \frac17+\frac12(z-2)+\frac12(z-2)^2,&z>2.
    \end{cases}
\]
These formulas avoid numerical quadrature and subtraction of nearby
objective values.

We use $\kappa\in\{4,16,64,256\}$ and initial positions
$x_0/a_\kappa\in\{1.5,2,3\}$, with $v_0=0$.
The main-text comparison uses $x_0=2a_\kappa$.
GD baselines use either $1/L$ or $2/(L+\mu)$.
The hybrid always uses $1/L$ for warm-up, then the quadratic-optimal
HB parameters for $[\mu,L]=[1,\kappa]$.
The startup budget uses the general-$\gamma$ definitions in
\eqref{eq:energy_budget}, with $\nu=M=1$.
The current gradient is reused for the startup test and the update.
We require six consecutive post-update states with
$\sqrt{x_k^2+v_k^2}<10^{-10}a_\kappa$ and allow at most
$10^5$ updates. Every run meets the stopping criterion.
GD's state velocity is its latest displacement.
At the switching time, traces record the reset velocity zero.

The four main runs switch at normalized positions
$0.1500996$, $0.02037265$, $0.002497164$, and $0.0002985594$,
respectively. These positions lie in the quadratic core.
If $T$ is the switching time and
$q:=(\sqrt\kappa-1)/(\sqrt\kappa+1)$, then
\begin{equation}
    \begin{aligned}
        x_{T+j}&=x_T\bigl[1+(1-q)j\bigr]q^j,\\
        v_{T+j}&=-x_T(1-q)^2j q^{j-1},
        ~~ j\ge0.
    \end{aligned}
    \label{app:eq:numerical_tail}
\end{equation}
Indeed, the characteristic polynomial in the core is $(z-q)^2$,
and velocity resetting gives $x_{T-1}=x_T$.
The position formula is positive and nonincreasing when $x_T>0$,
so the trajectory remains in the core.
Its asymptotic root rate is $q$; consecutive state-norm ratios include
the repeated-root transient.
Write $e_j:=\|(x_{T+j},v_{T+j})\|_2$ for the post-switch state error.
For the main runs, the largest relative discrepancy between the
computed and exact state in \eqref{app:eq:numerical_tail} is
$3.05\times10^{-14}$; across all tested starts, it is below
$3.8\times10^{-14}$.
The energy budget, next-displacement bound, and energy contraction
from Theorem~\ref{thm:invariant} hold throughout the HB phases.

\begin{figure}[ht]
    \centering
    \includegraphics[width=\textwidth]{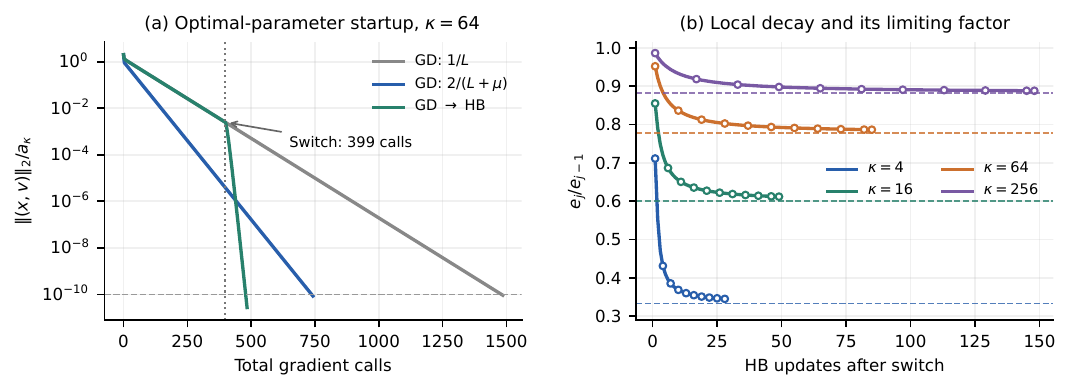}
    \caption{
    Optimal-parameter startup.
    (a) Total gradient cost on $f_{64}$ from $(2a_{64},0)$;
    the hybrid switches after 399 GD updates.
    (b) Consecutive post-switch state-error ratios for the four
    condition numbers. Symbols show numerical trajectories,
    solid curves show the exact repeated-root solution, and
    dashed lines mark the limiting factors $q$.
    }
    \label{app:fig:polyak_startup}
\end{figure}

Table~\ref{app:tab:polyak_all} gives all tested initializations.
The total-cost comparison depends on the initial position:
for $x_0=3a_\kappa$, GD with step $2/(L+\mu)$ reaches
$x_1=0$ exactly because
$f_\kappa'(3a_\kappa)=3(\kappa+1)a_\kappa/2$.
It then meets the state-error and six-state confirmation rule
in seven calls. For warm-up with step $1/L$, positions
$2a_\kappa$ and $3a_\kappa$ produce the same first iterate,
which explains their identical subsequent hybrid trajectories.

\begin{table}[ht]
    \centering
    \begin{tabular}{ccccccc}
        \toprule
        $\kappa$ & $x_0/a_\kappa$ & GD $1/L$ & GD $2/(L+\mu)$
        & Warm-up & HB phase & Hybrid total\\
        \midrule
        $4$ & $1.5$ & 87 & 52 & 8 & 28 & 36\\
        $4$ & $2$ & 87 & 51 & 8 & 28 & 36\\
        $4$ & $3$ & 87 & 7 & 8 & 28 & 36\\
        $16$ & $1.5$ & 367 & 192 & 65 & 49 & 114\\
        $16$ & $2$ & 368 & 190 & 66 & 49 & 115\\
        $16$ & $3$ & 368 & 7 & 66 & 49 & 115\\
        $64$ & $1.5$ & 1485 & 751 & 398 & 85 & 483\\
        $64$ & $2$ & 1486 & 742 & 399 & 85 & 484\\
        $64$ & $3$ & 1486 & 7 & 399 & 85 & 484\\
        $256$ & $1.5$ & 5943 & 2979 & 2128 & 148 & 2276\\
        $256$ & $2$ & 5944 & 2953 & 2129 & 148 & 2277\\
        $256$ & $3$ & 5944 & 7 & 2129 & 148 & 2277\\
        \bottomrule
    \end{tabular}
    \caption{
    All optimal-parameter startup runs. Entries count gradient calls;
    total costs include stopping confirmation.
    The hybrid's warm-up uses $1/L$ in every case.
    }
    \label{app:tab:polyak_all}
\end{table}

\end{document}